\documentclass[11pt,a4paper]{article}

\usepackage[margin=1in]{geometry}
\usepackage[T1]{fontenc}
\usepackage[utf8]{inputenc}
\usepackage{geometry}
\usepackage{lmodern}
\usepackage{microtype}
\usepackage{amsmath,amssymb,amsthm,mathtools}
\usepackage{booktabs,tabularx,array,multirow}
\usepackage{enumitem}
\usepackage{xurl}
\usepackage{xcolor}
\usepackage{hyperref}
\usepackage{fancyhdr}
\usepackage{authblk}
\usepackage{graphicx}
\usepackage[nameinlink,noabbrev]{cleveref}

\allowdisplaybreaks[2]
\numberwithin{equation}{section}

\newtheorem{theorem}{Theorem}[section]
\newtheorem{proposition}[theorem]{Proposition}
\newtheorem{lemma}[theorem]{Lemma}
\newtheorem{corollary}[theorem]{Corollary}
\theoremstyle{definition}
\newtheorem{definition}[theorem]{Definition}
\newtheorem{example}[theorem]{Example}
\theoremstyle{remark}

\newcommand{\I}{\mathbb I}
\newcommand{\Sier}{\mathbb S}
\newcommand{\LFTS}{\mathbf{LFTS}}
\newcommand{\const}[1]{\underline{#1}}
\newcommand{\Fin}{\operatorname{Fin}}
\newcommand{\ev}{\operatorname{ev}}
\newcommand{\sub}{\operatorname{sub}}
\newcommand{\Hom}{\operatorname{Hom}}
\newcommand{\id}{\operatorname{id}}

\title{Exponentiable Objects and Function spaces in Lowen Fuzzy Topological Spaces}
\author{Yongming Li
  \thanks{Corresponding author.
    E-mail:    \href{mailto:liyongm@snnu.edu.cn}
         {\texttt{liyongm@snnu.edu.cn}}}        }
\affil{School of Mathematics and Statistics, Shaanxi Normal University, \\ Xi’an, 710119, China}

\date{}

\begin{document}
\maketitle

\begin{abstract}
We study exponentiable objects and function spaces in the category of stratified Lowen fuzzy topological spaces over \(\I=[0,1]\). Using the Lowen fuzzy Sierpiński object \(\Sier\), which identifies \(\tau_X\) with \(C(X,\Sier)\), we explicitly determine the largest splitting topology on this mapping set. Its open weights \(\Phi:\tau_X\to\I\) are precisely those satisfying Scott continuity and a finite-tier compatibility condition induced by finite powers of \(\Sier\). This yields an intrinsic characterization: \(X\) is exponentiable if and only if every \(\mu\in\tau_X\) satisfies
\[
 \mu=\bigvee_{\lambda\triangleleft\Phi}
       (\const{\Phi(\mu)}\wedge\lambda),
 \qquad
 \lambda\triangleleft\Phi
 \Longleftrightarrow
 \const{\Phi(\nu)}\wedge\lambda\leq\nu
 \quad(\nu\in\tau_X).
\]
When this condition holds, \(Y^X\) has underlying set \(C(X,Y)\), with topology generated by
\([\Phi,v](f)=\Phi(v\circ f)\).
We also obtain a dual closed-set formulation and three applications. Exponentiability implies that \(\tau_X\) is a continuous lattice, although the converse fails. Moreover, a classical space \(X\) is exponentiable exactly when its induced fuzzy space \(\omega X\) is exponentiable in the entire stratified Lowen category. Finally, Lowen compact, strongly fuzzy compact, and \(N\)-compact Hausdorff spaces are exponentiable.

\end{abstract}

\noindent\textbf{Keywords.}
Lowen fuzzy topology; exponentiable object; fuzzy Sierpiński object;
function space; Scott continuity; continuous lattice; splitting topology.

\section{Introduction}

Following Zadeh's theory of fuzzy sets, Chang introduced fuzzy
topological spaces by replacing ordinary open subsets with
\([0,1]\)-valued fuzzy subsets~\cite{Chang1968-en}.  Lowen subsequently
emphasized the stratified setting in which all constant fuzzy subsets
are open, and developed fuzzy continuity, initial and final structures,
and several compactness notions~\cite{Lowen1976-en,Lowen1978-en}.
Liu, Zhang, and Luo later studied stratified \(L\)-fuzzy topology and
other fuzzy-topological categories systematically through initially
closed and finally closed full subconstructs~\cite{LiuZhangLuo2000-en}.
Pu and Liu built a neighbourhood and net-convergence theory from fuzzy
points, quasi-coincidence, and \(Q\)-neighbourhoods
~\cite{PuLiu1980-en}.  These developments exposed two persistent
requirements of point-set fuzzy topology: categorical constructions
should extend their classical counterparts, while the interaction of
the truth-value scale \([0,1]\) with products, convergence, and
function spaces must remain visible.

Exponentiable objects lie precisely at this intersection.  In a
category with finite products, an object \(X\) is exponentiable when
the product functor \(-\times X\) has a right adjoint; equivalently,
every target \(Y\) admits a function-space object \(Y^X\) with a
natural bijection
\[
 \Hom(Z\times X,Y)\cong\Hom(Z,Y^X).
\]
The classical category \(\mathbf{Top}\) is not Cartesian closed: its
exponentiable objects are exactly the core-compact spaces, equivalently
those whose open-set lattices are continuous~\cite{EscardoHeckmann-en}.
Determining the exponentiable Lowen fuzzy spaces is therefore not
merely a matter of selecting a convenient topology on one mapping set.
It identifies the part of the category possessing internal Homs, and a
complete answer must construct \emph{every} exponential \(Y^X\), not
only verify evaluation for a special target.

Since the introduction of fuzzy topology, the study of corresponding function spaces has been an important direction, which has also yielded a wealth of results.
Alderton applied the splitting--conjoining framework
directly to fuzzy-topological categories
~\cite{Alderton1989-en}; Peng, Dang--Behera, and J\"ager considered
pointwise convergence, fuzzy compact-open topologies, and continuous
convergence, respectively~\cite{Peng1984-en,DangBehera1996-en,Jager1999-en}.
The work of Lowen and Srivastava on fuzzy Sierpiński objects showed that
fuzzy opens can be classified by continuous maps into a distinguished
object~\cite{Srivastava1984-en,SrivastavaSrivastava1986-en,
LowenSrivastava1988-en,LowenSrivastava1989-en}.  In the broader setting
of \(Q\)-topology, Solovyov and Singh--Srivastava developed
\(Q\)-Sierpiński objects and categorical characterizations
~\cite{Solovyov2008-en,SinghSrivastava2013-en}.  For stratified
\(Q\)-topology, the classifier used in the literature is precisely
$
 \bigl(Q,\langle\{\id_Q\}\cup
 \{\const q:q\in Q\}\rangle\bigr),
$
the abstract form of our \(\Sier\)~\cite{TiwariSrivastava2022-en}.
Tiwari and Srivastava further obtained a Sierpiński-based
splitting--conjoining criterion for exponential \(Q\)-topological
spaces~\cite{TiwariSrivastava2021-en}.  On the other hand, the ``Lowen
spaces'' of Liu and Zhang form the narrower one-step-function
construct~\cite{LiuZhang2000-en}; continuity of the open-set lattice is
sufficient for exponentiability inside that construct
~\cite{LiuZhang2001-en}.  

The abstract function-space criterion for exponentiable objects has a
well-established categorical origin.  Schwarz proved that, in initially
structured categories, exponentiability is equivalent to the existence
of a proper and admissible structure on the set of continuous maps
~\cite{Schwarz1983-en}.  Alderton developed this into the
splitting--conjoining theory for monotopological categories and allowed
the target to be restricted to an initially dense class \cite{Alderton1988-en}. 

Currently, the characterizations of fuzzy exponential objects are all based on category theory, or are defined by means of other fuzzy topological spaces. Researchers have long been seeking an intrinsic characterization of fuzzy exponential objects, analogous to the fact that in general topology, exponential objects correspond precisely to core-compact spaces, which in turn correspond to topological spaces whose open-set lattices are continuous lattices.

In parallel with this, the study of exponential objects in topological molecular lattices is relatively complete. Li has already given a complete intrinsic characterization: a topological molecular lattice is exponential if and only if the dual of its cotopology is a continuous lattice \cite{Li1999-en}. Mirhosseinkhani's work
organizes
products, Isbell-type function spaces, and exponentiable objects in
\(\mathbf{TopFuzz}\), where exponentiability is characterized by core
compactness~\cite{Mirhosseinkhani2024-en}.  These results make clear
that continuous-lattice and core-compactness structures are fundamental
to categorical exponentiation.  Their objects, morphisms, and products,
however, differ from those of the point-set stratified category used
here; their characterizations therefore cannot simply be transferred
to \(\LFTS\).

The remaining obstacle in the point-set Lowen category is twofold.
Ordinary Scott continuity describes directed approximation in the
fuzzy-open lattice but does not control how scalar tiers pass through
finite products; and a splitting function-space topology need not make
evaluation jointly continuous.  By using the Lowen fuzzy Sierpiński
object, this paper makes both obstructions explicit and, within the
ambient stratified Lowen category and its usual continuous maps,
gives an explicit intrinsic characterization of exponentiable objects
and their function-space structures.  The contribution is not the
general splitting--conjoining principle, but the following calculations
and consequences.
\begin{enumerate}[label=(\arabic*)]
 \item We compute the largest splitting Lowen topology \(\Sigma_X\) on
 \(C(X,\Sier)=\tau_X\); its fuzzy opens are exactly the weights that are
 Scott-continuous and satisfy finite-tier compatibility (FTC).
 \item We introduce compatible cores \(\lambda\triangleleft\Phi\) and
 prove that \(X\) is exponentiable exactly when stratified Scott--core
 approximation (SKA) holds.  For every target \(Y\), the exponential
 topology is generated uniformly by
 \([\Phi,v](f)=\Phi(v\circ f)\). A closed-lattice dual of
 SKA is also derived.
 \item We prove that exponentiability forces \(\tau_X\) to be a
 continuous lattice and give a two-point Lowen counterexample showing
 that lattice continuity is not sufficient in the ambient stratified
 category.
 \item We construct tier weights from the way-below relation in the
 classical open-set lattice and prove that \(X\) is exponentiable in
 \(\mathbf{Top}\) exactly when \(\omega X\) is exponentiable in
 \(\LFTS\), by directly verifying FTC and SKA rather than transferring
 exponentials formally through a reflection or coreflection.
 \item We clarify that fuzzy Haudorff spaces with Lowen, strong, or
 \(N\)-compactness are exponentiable.
\end{enumerate}

Section~2 fixes the category and introduces the fuzzy Sierpiński
object.  Section~3 treats splitting, conjoining, and the exponential
law.  Section~4 computes the largest splitting topology, and Section~5
proves the main characterization.  Section~6 contains the
continuous-lattice counterexample, the classical-induction
equivalence, and compactness consequences. Section~7 we conclude the paper.

\section{Preliminaries}

Put \(\I=[0,1]\), ordered in the usual way, with arbitrary joins and
finite meets given by pointwise suprema and minima.  The constant fuzzy
set with value \(a\) is denoted by \(\const a\).

\begin{definition}[Stratified Lowen fuzzy topology]
A Lowen fuzzy topology on a set \(X\) is a subset
\(\tau_X\subseteq\I^X\) containing every constant fuzzy set and closed
under arbitrary pointwise joins and finite pointwise meets.  A map
\(f:X\to Y\) is continuous if \(v\circ f\in\tau_X\) for every
\(v\in\tau_Y\).  These spaces and maps form the category \(\LFTS\)
~\cite{Lowen1976-en,LiuZhangLuo2000-en}.
\end{definition}

This is the \(Q=[0,1]\) instance of stratified \(Q\)-topology, but all
arguments below are given directly in the point-set Lowen category.
Here ``Lowen'' means stratified (all constants are open).  A narrower
use of the term, in which one-step functions form a base, is discussed
separately in Section~\ref{sec:questions}.

\begin{lemma}[Rectangles in products]\label{lem:rect-en}
For Lowen spaces \(Z,X\), a map \(w:Z\times X\to\I\) is open in the
categorical product if and only if
\begin{equation}\label{eq:rect-en}
 w(z,x)=\bigvee_{j\in J}\bigl(a_j(z)\wedge b_j(x)\bigr),
 \qquad a_j\in\tau_Z,\quad b_j\in\tau_X.
\end{equation}
\end{lemma}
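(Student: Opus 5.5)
The plan is to identify the categorical product topology explicitly as the initial Lowen topology on \(Z\times X\) for the two projections \(\pi_Z,\pi_X\), and then to show that this initial topology is exactly the family \(\mathcal R\) of all maps of the form \eqref{eq:rect-en}. In \(\LFTS\) the product carries the coarsest Lowen topology making both projections continuous. So it is the Lowen topology generated by the subbase \(\{a\circ\pi_Z: a\in\tau_Z\}\cup\{b\circ\pi_X: b\in\tau_X\}\) together with all constants, closed under finite meets and arbitrary joins. I will first verify, or recall, this description of products as the usual initial structure in a topological construct. Given that, the lemma reduces to the claim that \(\mathcal R\) is precisely the closure of that subbase.

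For the inclusion \(\mathcal R\subseteq\tau_{Z\times X}\), note that \(a_j\circ\pi_Z\) and \(b_j\circ\pi_X\) are open because the projections are continuous. Their pointwise meet \(a_j(z)\wedge b_j(x)\) is then open, and so is any join of such meets.

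For the reverse inclusion, I will show that \(\mathcal R\) is itself a Lowen topology containing the subbase, so it contains the generated topology.
\begin{itemize}
\item \emph{Subbase.} Take \(a\circ\pi_Z=a(z)\wedge\const1(x)\) and \(b\circ\pi_X=\const1(z)\wedge b(x)\). This uses stratification: \(\const1\) is open in each factor.
\item \emph{Constants.} Take \(\const c(z)\wedge\const c(x)\). When \(c=0\), either use the empty join or the constant \(\const0\).
\item \emph{Arbitrary joins.} Closure is immediate, since one simply concatenates the index sets.
\item \emph{Finite meets.} Here the step is distributivity of \(\wedge\) over arbitrary \(\bigvee\) in \(\I\), which holds because \(\I\) is a frame. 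This gives
\[
\Bigl(\bigvee_{j}a_j(z)\wedge b_j(x)\Bigr)\wedge\Bigl(\bigvee_{k}a'_k(z)\wedge b'_k(x)\Bigr)=\bigvee_{j,k}\bigl((a_j\wedge a'_k)(z)\wedge(b_j\wedge b'_k)(x)\bigr),
\]
and \(a_j\wedge a'_k\in\tau_Z\), \(b_j\wedge b'_k\in\tau_X\).
\end{itemize}

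The main point needing care is the first step: that the categorical product in \(\LFTS\) really is the set-theoretic product with the initial topology. This requires checking the universal property. A map \(h:W\to Z\times X\) with continuous components is continuous, because preimages of subbasic opens are \(a\circ\pi_Z\circ h\), which is open. Continuity then passes through finite meets and joins, since composition preserves pointwise operations. Once this is settled, the rest is the frame-distributivity computation above.
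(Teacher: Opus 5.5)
Your proposal is correct and follows essentially the paper's argument: the product topology is generated by the pulled-back opens $a\circ\pi_Z$ and $b\circ\pi_X$ (together with constants), and joins of rectangles form exactly that topology. You make explicit two steps the paper leaves implicit. The first is stratification, which lets a generator from only one factor, such as $a(z)=a(z)\wedge 1$, count as a rectangle. The second is infinite distributivity of $\wedge$ over $\bigvee$ in $\I$, which makes rectangle joins closed under finite meets.
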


\begin{proof}
The product topology is generated by the inverse images of fuzzy opens
under the two projections.  A finite meet of such generators combines
into one factor from \(Z\) and one from \(X\); arbitrary joins then give
\eqref{eq:rect-en}.  The reverse inclusion follows from closure of the
product topology.
\end{proof}

\subsection{The Lowen fuzzy Sierpiński object}

Let \(\delta_{\Sier}\) be the least Lowen topology on \(\I\) generated
by \(\id_{\I}\), and put \(\Sier=(\I,\delta_{\Sier})\).  Early fuzzy
Sierpiński constructions and their categorical role appear in
~\cite{Srivastava1984-en,SrivastavaSrivastava1986-en,
LowenSrivastava1988-en}; the corresponding stratified \(Q\)-topological
form is discussed in~\cite{TiwariSrivastava2022-en}.

\begin{proposition}[Classification of fuzzy opens]\label{prop:class-en}
For every Lowen space \(X\) and every map \(u:X\to\I\),
\[
 u\in\tau_X \quad\Longleftrightarrow\quad u:X\to\Sier
 \text{ is continuous}.
\]
Consequently, naturally in \(X\),
\begin{equation}\label{eq:hom-en}
 C(X,\Sier)=\Hom_{\LFTS}(X,\Sier)\cong\tau_X.
\end{equation}
\end{proposition}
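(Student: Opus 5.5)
The plan is to first describe \(\delta_{\Sier}\) explicitly and then check both implications directly from the definition of continuity. Since \(\delta_{\Sier}\) is the least Lowen topology on \(\I\) containing \(\id_{\I}\), it is obtained by closing \(\{\id_{\I}\}\cup\{\const a:a\in\I\}\) under finite meets and then arbitrary joins. A finite meet of generators has the form \(\const a\wedge\id_{\I}\) or \(\const a\); the meet of \(\id_{\I}\) with itself is \(\id_{\I}\), which is \(\const 1\wedge\id_{\I}\). Hence every element of \(\delta_{\Sier}\) can be written as
\[
 v=\bigvee_{j\in J}\bigl(\const{a_j}\wedge\id_{\I}\bigr)\vee\bigvee_{k\in K}\const{b_k}.
\]
I would confirm that this family already contains the constants and \(\id_{\I}\) and is closed under arbitrary joins and finite meets. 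Binary meets distribute over joins in \(\I\) pointwise, so no further closure step is needed.

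\emph{Continuity of \(u\) implies openness.} If \(u:X\to\Sier\) is continuous, then \(\id_{\I}\circ u=u\) lies in \(\tau_X\), because \(\id_{\I}\in\delta_{\Sier}\).

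\emph{Openness implies continuity.} Suppose \(u\in\tau_X\). Precomposition with \(u\) preserves pointwise joins, meets and constants, so
\[
 v\circ u=\bigvee_j\bigl(\const{a_j}\wedge u\bigr)\vee\bigvee_k\const{b_k}.
\]
Stratification puts each \(\const{a_j}\) and \(\const{b_k}\) in \(\tau_X\), and \(u\in\tau_X\) by assumption. The closure properties of \(\tau_X\) then give \(v\circ u\in\tau_X\). (Alternatively, one can observe that \(\{v: v\circ u\in\tau_X\}\) is itself a Lowen topology containing \(\id_{\I}\), so it contains \(\delta_{\Sier}\); this avoids the explicit description entirely.)

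The bijection \eqref{eq:hom-en} is \(u\mapsto u\). For naturality in \(X\), take a continuous \(g:X'\to X\). Composing with \(g\) sends \(C(X,\Sier)\) to \(C(X',\Sier)\) and sends \(\tau_X\) to \(\tau_{X'}\) by \(u\mapsto u\circ g\). The latter map is well defined precisely because \(g\) is continuous, and both maps are literally the same function on underlying maps, so the square commutes.

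The main obstacle is making sure the generated topology is described correctly, in particular that no closure beyond finite meets followed by joins is required. The alternative "pullback topology" argument avoids this issue, so I would present that version as the primary proof.
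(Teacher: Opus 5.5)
Your proof is correct and follows the paper's argument. The forward direction is \(u=\id_{\I}\circ u\). The converse rests on \(\delta_{\Sier}\) being generated by \(\id_{\I}\) and the constants under arbitrary joins and finite meets, and precomposition with \(u\) preserves these operations. Your pullback-topology formulation (that \(\{v: v\circ u\in\tau_X\}\) is a Lowen topology containing \(\id_{\I}\)) just makes the paper's one-line remark rigorous, and your naturality check fills in a detail the paper leaves implicit.
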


\begin{proof}
If \(u\) is continuous, then \(u=\id_\I\circ u\) is fuzzy open.  The
converse follows because every member of \(\delta_{\Sier}\) is obtained
from \(\id_\I\) and constants by arbitrary joins and finite meets.
\end{proof}

\begin{lemma}[Fuzzy opens of \(\Sier^n\)]\label{lem:sier-n-en}
For \(n\geq1\), a map \(p:\I^n\to\I\) is fuzzy open in the product
\(\Sier^n\) if and only if there are constants
\(c_A\in\I\), \(A\subseteq\{1,\ldots,n\}\), such that
\begin{equation}\label{eq:sugeno-en}
 p(r_1,\ldots,r_n)=
 \bigvee_{A\subseteq\{1,\ldots,n\}}
 \left(c_A\wedge\bigwedge_{i\in A}r_i\right),
\end{equation}
where the empty meet is \(1\).  In particular, the unary fuzzy opens
are exactly
\begin{equation}\label{eq:clamp-en}
 p(r)=a\vee(r\wedge b),\qquad 0\leq a\leq b\leq1.
\end{equation}
\end{lemma}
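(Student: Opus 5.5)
The plan is to generalize the proof of Lemma~\ref{lem:rect-en} from two factors to $n$ factors and then use that $\delta_{\Sier}$ is explicitly generated. First I will identify the subbasic opens of $\Sier^n$. The product topology on $\I^n$ is the least Lowen topology making every projection $\pi_i:\I^n\to\Sier$ continuous. By Proposition~\ref{prop:class-en}, and because $\delta_{\Sier}$ is generated by $\id_\I$ and the constants, this is the least Lowen topology containing the coordinate functions $r_i$ together with all constants. An open of $\Sier^n$ is therefore an arbitrary join of finite meets of coordinates and constants.

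Next I will put such a finite meet into normal form. Any finite meet of constants and coordinates equals $c\wedge\bigwedge_{i\in A} r_i$ for a single constant $c$ (the minimum of the constants involved, or $1$ if none occur) and a set $A\subseteq\{1,\ldots,n\}$. An arbitrary join of such terms can then be grouped by $A$. Since there are only $2^n$ possible sets $A$, and $\bigvee_k (c_k\wedge m)=(\bigvee_k c_k)\wedge m$ in $\I$ (complete distributivity of the chain, or simply monotonicity of min), each group collapses to one term $c_A\wedge\bigwedge_{i\in A}r_i$, with $c_A=\sup$ of the constants in that group and $c_A=0$ for empty groups. This gives the form \eqref{eq:sugeno-en}.

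For the converse, every function of the form \eqref{eq:sugeno-en} is a finite join of finite meets of constants and coordinates, so it lies in the product topology. Here I am assuming, as in Lemma~\ref{lem:rect-en}, that the categorical product in $\LFTS$ is the initial structure on the set product; it then contains the constants because it is stratified. The only mildly delicate point is to check that the functions of the form \eqref{eq:sugeno-en} are already closed under finite meets, or else to avoid needing this. I avoid it by arguing generatively: the open sets are exactly joins of finite meets of generators, which is a standard fact for the least topology generated by a family, since joins distribute over finite meets in $\I^X$ pointwise.

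Finally, I specialize to $n=1$. Then $p(r)=c_\emptyset\vee(c_{\{1\}}\wedge r)$. Put $a=c_\emptyset$ and $b=a\vee c_{\{1\}}$. Then $a\vee(r\wedge b)=a\vee(r\wedge a)\vee(r\wedge c_{\{1\}})=a\vee(r\wedge c_{\{1\}})$, which yields \eqref{eq:clamp-en} with $a\le b$. Conversely, any such $a\vee(r\wedge b)$ is of the form \eqref{eq:sugeno-en} with $c_\emptyset=a$ and $c_{\{1\}}=b$. I expect no step to be a real obstacle. The only thing requiring care is the normalization that ensures $a\le b$.
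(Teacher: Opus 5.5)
Your proposal is correct and follows essentially the same route as the paper: the product is generated by the coordinate maps and the constants, each finite meet normalizes to $c\wedge\bigwedge_{i\in A}r_i$, and terms with the same index set $A$ are merged by taking the supremum of their coefficients. Your explicit choice $b=a\vee c_{\{1\}}$ in the unary case, which secures $a\le b$, fills in a detail the paper leaves implicit.
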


\begin{proof}
The product is generated by the coordinate maps and the constants.
Every finite meet of generators has the form
\(c\wedge\bigwedge_{i\in A}r_i\).  Terms with the same finite index set
can be joined by taking the supremum of their coefficients, which gives
\eqref{eq:sugeno-en}.  The converse is immediate.
\end{proof}

\section{Function spaces, splitting, and exponentiability}

Let \(C(X,Y)\) denote the set of continuous maps from \(X\) to \(Y\).
A Lowen topology \(\eta\) on this set is \emph{splitting} if every
continuous \(H:Z\times X\to Y\) has continuous transpose
\(\widehat H:Z\to(C(X,Y),\eta)\).  It is \emph{conjoining} if the
evaluation map
\[
 \ev:C(X,Y)\times X\to Y,\qquad \ev(f,x)=f(x),
\]
is continuous.

\begin{lemma}[Exponential-law criterion]\label{lem:exp-law-en}
An object \(X\) is exponentiable in \(\LFTS\) if and only if, for every
Lowen space \(Y\), the set \(C(X,Y)\) admits a topology that is both
splitting and conjoining.  That function space is then \(Y^X\).
\end{lemma}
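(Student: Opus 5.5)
We prove both directions through the standard transpose correspondence, using only that \(\LFTS\) has finite products, with the product topology described by Lemma~\ref{lem:rect-en}, and that continuity is tested on fuzzy opens of the codomain.

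\emph{Sufficiency.} Suppose every \(C(X,Y)\) carries a topology \(\eta_Y\) that is splitting and conjoining, and put \(Y^X=(C(X,Y),\eta_Y)\). We exhibit the natural bijection \(\Hom(Z\times X,Y)\cong\Hom(Z,Y^X)\), \(H\mapsto\widehat H\).
\begin{enumerate}[label=(\roman*)]
 \item The map \(H\mapsto\widehat H\) is well defined. For each \(z\), the map \(\widehat H(z)=H(z,-)\) is continuous: it is the composite of \(H\) with the map \(x\mapsto(z,x)\), and this map is continuous because the preimage of a rectangle \(a(z')\wedge b(x)\) is \(\const{a(z)}\wedge b\in\tau_X\), using stratification. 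By the splitting property, \(\widehat H\) is continuous into \(Y^X\).
 \item The map has an inverse. Given a continuous \(g:Z\to Y^X\), set \(H=\ev\circ(g\times\id_X)\). Here \(g\times\id_X\) is continuous by the universal property of the product, and \(\ev\) is continuous by the conjoining property. So \(H\) is continuous, and by construction \(\widehat H=g\).
 \item The two assignments are mutually inverse at the level of underlying functions, and naturality in \(Z\) (and in \(Y\), via composition \(f\mapsto v\circ f\)) is immediate.
\end{enumerate}
Hence \(-\times X\) has a right adjoint \(Y\mapsto Y^X\). Objectwise, a right adjoint is exactly a family of universal arrows, and the conjoining evaluation supplies them.

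\emph{Necessity.} Suppose \(-\times X\dashv(-)^X\), with counit \(\varepsilon:Y^X\times X\to Y\). We transport the structure to \(C(X,Y)\).
\begin{enumerate}[label=(\roman*)]
 \item Identify the points of \(Y^X\) with \(C(X,Y)\) using the one-point space \(1\), which is the point with its unique stratified topology \(\I\). We have \(\Hom(1,Y^X)\cong\Hom(1\times X,Y)\cong C(X,Y)\). The forgetful functor \(\LFTS\to\mathbf{Set}\) is represented by \(1\), so the underlying set of \(Y^X\) is in bijection with \(C(X,Y)\).
 \item Under this bijection, naturality of the adjunction in \(Z\) gives \(\varepsilon(f,x)=f(x)\). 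Concretely, for a point \(f:1\to Y^X\), the transpose of \(f\) is \(\varepsilon\circ(f\times\id_X)\) on \(1\times X\cong X\).
 \item Transfer the topology of \(Y^X\) to \(C(X,Y)\) and call it \(\eta\). Then \(\ev=\varepsilon\) is continuous, so \(\eta\) is conjoining.
 \item For a continuous \(H:Z\times X\to Y\), its adjoint transpose is a continuous map \(Z\to Y^X\). Evaluating naturality at points \(z:1\to Z\) shows that this transpose sends \(z\) to \(H(z,-)\), so it equals \(\widehat H\). Hence \(\eta\) is splitting.
\end{enumerate}
The final clause, that this function space is \(Y^X\), follows because the adjunction determines \(Y^X\) up to isomorphism. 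Alternatively, a splitting and conjoining topology is unique: splitting applied to \(\ev\) for one topology and conjoining for the other shows each topology is finer than the other.

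The \textbf{main obstacle} is step (i) of necessity, identifying the underlying set of an abstract exponential with \(C(X,Y)\). The abstract exponential could a priori have points not of this form. The representability of the forgetful functor by the stratified singleton resolves this, so the real care lies in checking that \(1\times X\cong X\) in \(\LFTS\). This holds because constants are open, so every rectangle \(\const a\wedge b\) reduces to a fuzzy open of \(X\).
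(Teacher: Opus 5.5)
Your proposal is correct and follows the same route as the paper. The adjunction and its counit supply a splitting and conjoining topology; conversely, transposition and $\ev\circ(g\times\id_X)$ are mutually inverse; and the underlying set of $Y^X$ is identified with $C(X,Y)$ via $\Hom(1,Y^X)\cong\Hom(1\times X,Y)\cong\Hom(X,Y)$. You also fill in details the paper leaves implicit: continuity of the sections $H(z,-)$ (using stratification), and uniqueness of a splitting and conjoining topology, which makes the clause ``that function space is then $Y^X$'' precise.
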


\begin{proof}
The forward implication is the exponential adjunction and its counit.
Conversely, transposition and evaluation are inverse on underlying sets
and are natural once splitting and conjoining hold.  The underlying set
of an exponential (denoted by $(U(Y^X)$ here) is indeed \(C(X,Y)\): every point selection from the
one-point terminal Lowen space is continuous, so
\(U(Y^X)\cong\Hom(1,Y^X)\cong\Hom(X,Y)\).
\end{proof}

The preceding criterion is the concrete form, in \(\LFTS\), of the
proper/admissible and splitting/conjoining principles of Schwarz and
Alderton~\cite{Schwarz1983-en,Alderton1988-en}.  Alderton subsequently
applied this framework to fuzzy function spaces~\cite{Alderton1989-en},
and J\"ager compared it with continuous-convergence and compact-open
constructions~\cite{Jager1999-en}.  What remains here is to calculate
the largest splitting topology and the conjoining condition explicitly.

Splitting topologies are closed under taking the Lowen topology generated
by their union.  Hence \(C(X,Y)\) has a largest splitting topology.  We
now compute it for \(Y=\Sier\).

\section{Tier-compatible Scott weights}

Regard \(\tau_X\) as a complete lattice in the pointwise order.  Directed
sets are nonempty.

\begin{definition}[Scott weight and finite-tier compatibility]
A map \(\Phi:\tau_X\to\I\) is Scott-continuous if it is monotone and
\[
 \Phi(\bigvee D)=\bigvee_{\mu\in D}\Phi(\mu)
\]
for every directed \(D\subseteq\tau_X\).  Such a weight satisfies
\emph{finite-tier compatibility} (FTC) if, for every \(n\geq1\) and
\(\beta_1,\ldots,\beta_n\in\tau_X\), the map
\begin{equation}\label{eq:ftc-en}
 P_{\Phi,\boldsymbol\beta}(r_1,\ldots,r_n)
 =\Phi\left(\bigvee_{i=1}^n
      (\const{r_i}\wedge\beta_i)\right)
\end{equation}
is fuzzy open in \(\Sier^n\).  Let \(\Sigma_X\) be the set of all
Scott-continuous weights satisfying FTC.
\end{definition}

By Lemma~\ref{lem:sier-n-en}, FTC says exactly that every finite-tier
test \eqref{eq:ftc-en} is a finite lattice polynomial of the form
\eqref{eq:sugeno-en}; it does not impose invariance under arbitrary
affine regradings of \([0,1]\).

\begin{proposition}\label{prop:sigma-en}
\(\Sigma_X\subseteq\I^{\tau_X}\) is a Lowen fuzzy topology.
\end{proposition}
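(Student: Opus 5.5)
We must verify three things for \(\Sigma_X\): it contains all constant weights, it is closed under finite pointwise meets, and it is closed under arbitrary pointwise joins. In each case the membership test has two parts, Scott continuity and FTC. FTC can be checked cleanly through Lemma~\ref{lem:sier-n-en}, or more simply by noting that \(P_{\Phi,\boldsymbol\beta}\) is the composite of \(\Phi\) with the map \(\boldsymbol r\mapsto\bigvee_i(\const{r_i}\wedge\beta_i)\). The construction of \(P\) commutes with pointwise lattice operations on \(\Phi\), and the fuzzy opens of \(\Sier^n\) form a Lowen topology.

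\textbf{Constants.} For \(\Phi=\const a\), monotonicity is trivial. Directed sets are nonempty, so the join of a directed family of copies of \(a\) is again \(a\), and Scott continuity holds. The associated map is \(P_{\const a,\boldsymbol\beta}=\const a\) on \(\I^n\). This is open in \(\Sier^n\) by stratification: take \(c_\emptyset=a\) and all other \(c_A=0\) in \eqref{eq:sugeno-en}.

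\textbf{Finite meets.} Let \(\Phi,\Psi\in\Sigma_X\). Then \(\Phi\wedge\Psi\) is monotone. For directed \(D\) we have
\[
(\Phi\wedge\Psi)(\bigvee D)=\Bigl(\bigvee_{\mu\in D}\Phi(\mu)\Bigr)\wedge\Bigl(\bigvee_{\nu\in D}\Psi(\nu)\Bigr)=\bigvee_{\mu,\nu\in D}\bigl(\Phi(\mu)\wedge\Psi(\nu)\bigr),
\]
using that finite meets distribute over arbitrary joins in \(\I\). Given \(\mu,\nu\in D\), directedness yields \(\kappa\in D\) above both, and monotonicity gives \(\Phi(\mu)\wedge\Psi(\nu)\le(\Phi\wedge\Psi)(\kappa)\). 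This proves the nontrivial inequality. For FTC, \(P_{\Phi\wedge\Psi,\boldsymbol\beta}=P_{\Phi,\boldsymbol\beta}\wedge P_{\Psi,\boldsymbol\beta}\) pointwise, and this is open because \(\Sier^n\) is closed under finite meets. Alternatively, one can expand the meet of two Sugeno forms \eqref{eq:sugeno-en} by distributivity into terms \(c_A\wedge d_B\wedge\bigwedge_{i\in A\cup B}r_i\), which is again of the form \eqref{eq:sugeno-en}.

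\textbf{Arbitrary joins.} Let \(\{\Phi_k\}_{k\in K}\subseteq\Sigma_X\) and \(\Phi=\bigvee_k\Phi_k\). Monotonicity is clear. Scott continuity follows by interchanging suprema: \(\bigvee_k\bigvee_{\mu\in D}\Phi_k(\mu)=\bigvee_{\mu\in D}\bigvee_k\Phi_k(\mu)\). For FTC, \(P_{\Phi,\boldsymbol\beta}=\bigvee_kP_{\Phi_k,\boldsymbol\beta}\), which is open since fuzzy opens of \(\Sier^n\) are closed under arbitrary joins. If one prefers the normal form, the coefficients combine as \(c_A=\bigvee_kc^{(k)}_A\). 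The empty join is \(\const0\), already covered by the constants case.

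\textbf{Main obstacle.} The only delicate point is Scott continuity of finite meets, which requires the directedness argument above. Everything else is formal. I would also note explicitly that \(P_{\Phi,\boldsymbol\beta}\) is defined pointwise in \(\Phi\), since this is what makes FTC stable under the lattice operations.
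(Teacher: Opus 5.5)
Your proposal is correct and follows the same route as the paper: you check constants, arbitrary joins and binary meets, using that the tests $P_{\Phi,\boldsymbol\beta}$ are computed pointwise in $\Phi$, that the fuzzy opens of $\Sier^n$ form a Lowen topology, and a directedness argument for Scott continuity of $\Phi\wedge\Psi$. You simply spell out the details (the common upper bound in $D$ and the interchange of suprema) that the paper's proof compresses into one line.
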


\begin{proof}
Constant weights satisfy the conditions.  Arbitrary joins commute with
directed joins and with all finite-tier tests.  If \(\Phi,\Psi\) satisfy
the conditions, directedness gives
\[
 \Phi\left(\bigvee D\right)\wedge \Psi\left(\bigvee D\right)
 =\bigvee_{\mu\in D}(\Phi(\mu)\wedge\Psi(\mu));
\]
hence \(\Phi\wedge\Psi\) is Scott-continuous, and its FTC tests are
finite meets of fuzzy opens.
\end{proof}

\begin{theorem}[Product stability]\label{thm:prod-stable-en}
For a map \(\Phi:\tau_X\to\I\), the following are equivalent:
\begin{enumerate}[label=(\roman*)]
 \item \(\Phi\in\Sigma_X\);
 \item for every Lowen space \(Z\) and every
 \(w\in\tau_{Z\times X}\), the map
 \[
  z\longmapsto\Phi(w_z),\qquad w_z(x)=w(z,x),
 \]
 belongs to \(\tau_Z\).
\end{enumerate}
\end{theorem}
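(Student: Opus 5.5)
The plan is to prove the two implications separately. Throughout, Lemma~\ref{lem:rect-en} supplies the shape of fuzzy opens in \(Z\times X\), and Lemma~\ref{lem:sier-n-en} supplies the shape of fuzzy opens in \(\Sier^n\).

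\emph{(ii)\(\Rightarrow\)(i).} Here we only need to choose suitable test spaces \(Z\).

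For FTC, take \(Z=\Sier^n\) and
\(w(r,x)=\bigvee_{i=1}^n(r_i\wedge\beta_i(x))\).
Each \(r_i\wedge\beta_i(x)\) is a rectangle with factors \(\pi_i\in\tau_{\Sier^n}\) and \(\beta_i\in\tau_X\), so \(w\in\tau_{Z\times X}\) by Lemma~\ref{lem:rect-en}. Its sections are \(w_r=\bigvee_i(\const{r_i}\wedge\beta_i)\), so (ii) says exactly that \(P_{\Phi,\boldsymbol\beta}\) is open in \(\Sier^n\).

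For monotonicity and Scott continuity, use a Scott-type test space. Let \(Z=\tau_X\) and give it the Lowen topology generated by the indicator functions \(\chi_{\uparrow\mu}\) for \(\mu\in\tau_X\), together with the constants. Put \(w(\nu,x)=\nu(x)\). This \(w\) is not obviously a rectangle join, so we instead use the identity
\[
 w(\nu,x)=\bigvee_{\mu}\bigl(\chi_{\uparrow\mu}(\nu)\wedge\mu(x)\bigr),
\]
which holds since \(\mu=\nu\) attains the join. Hence \(w\in\tau_{Z\times X}\), and (ii) makes \(\Phi\) open in this \(Z\).

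This forces monotonicity. Every open of \(Z\) is a join of finite meets \(c\wedge\bigwedge_k\chi_{\uparrow\mu_k}\), and each such meet is monotone in \(\nu\), so \(\Phi\) is monotone.

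It also forces directed-join preservation. Given directed \(D\), write \(\Phi\) as a join of these basic terms and evaluate at \(\bigvee D\). If a basic term \(c\wedge\bigwedge_k\chi_{\uparrow\mu_k}\) is nonzero there, then \(\mu_k\le\bigvee D\) for each \(k\). That does not yet give \(\mu_k\le d\) for some \(d\in D\). This is the \textbf{main obstacle}: the test topology must be finer than the upper one, namely the Scott topology.

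I would therefore replace the generators by the characteristic functions \(\chi_U\) of all Scott-open \(U\subseteq\tau_X\), and use
\[
 \nu(x)=\bigvee_{U}\bigl(\chi_U(\nu)\wedge\textstyle\bigwedge_{\nu'\in U}\nu'(x)\bigr).
\]
The right-hand side is at most \(\nu(x)\), since \(\nu\in U\) is required for a nonzero term. But the principal up-sets are not Scott open, so equality may fail.

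To fix this, I would instead use the topology whose opens are all monotone maps \(\tau_X\to\I\) that preserve directed joins, i.e.\ the Scott-continuous \(\I\)-valued maps. This is a Lowen topology, by the computation of Proposition~\ref{prop:sigma-en}. It still has to be checked that \(w(\nu,x)=\nu(x)\) is open in the product. I would try to verify this via Lemma~\ref{lem:rect-en} with the evaluation maps \(e_x\), splitting \(w\) by tiers: \(w=\bigvee_{t}\bigl(\const t\wedge\cdots\bigr)\). I expect this representation to be the delicate point.

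If that verification fails, the fallback is to show Scott continuity directly from FTC. A directed \(D\) would be approximated by tier decompositions \(\bigvee_i(\const{r_i}\wedge\beta_i)\), using the fact that the finite lattice polynomials in Lemma~\ref{lem:sier-n-en} are continuous in \(r\). This gives continuity along increasing tiers.

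\emph{(i)\(\Rightarrow\)(ii).} Write \(w=\bigvee_j(a_j\times b_j)\) as in Lemma~\ref{lem:rect-en}, so that
\[
 w_z=\bigvee_j\bigl(\const{a_j(z)}\wedge b_j\bigr).
\]
Next, express this as a directed join over finite subsets \(F\subseteq J\) of \(w_z^F=\bigvee_{j\in F}(\const{a_j(z)}\wedge b_j)\). Scott continuity then gives \(\Phi(w_z)=\bigvee_F\Phi(w_z^F)\).

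For each finite \(F\), FTC gives \(\Phi(w_z^F)=P_{\Phi,\boldsymbol b}\bigl((a_j(z))_{j\in F}\bigr)\), where \(P_{\Phi,\boldsymbol b}\) is a polynomial of the form \eqref{eq:sugeno-en}. Composing it with the opens \(a_j\) gives a map \(z\mapsto\bigvee_A(c_A\wedge\bigwedge_{j\in A}a_j(z))\), which lies in \(\tau_Z\). A join of such maps is in \(\tau_Z\), so \(z\mapsto\Phi(w_z)\) is open, completing (ii).
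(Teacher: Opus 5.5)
Your (i)$\Rightarrow$(ii), the FTC half of (ii)$\Rightarrow$(i), and your monotonicity argument via the upper-type test space are all correct. The gap is the essential part of (ii)$\Rightarrow$(i): you never prove that $\Phi$ preserves directed joins, and neither of your remaining routes can supply it.

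\emph{The Scott-continuous-maps test space fails.} With $Z=\tau_X$ carrying all Scott-continuous maps $\tau_X\to\I$, the map $w(\nu,x)=\nu(x)$ is in general not product-open. A decomposition $w=\bigvee_j(a_j(\nu)\wedge b_j(x))$ with each $a_j$ Scott-continuous would give $\const r\wedge b_j\ll\mu$ whenever $r<a_j(\mu)$, by the argument of Theorem~\ref{thm:continuous-en}. That would force $\tau_X$ to be a continuous lattice. This fails for $\omega X$ whenever $X$ is not core-compact (see the last step of the proof of Theorem~\ref{thm:induced-equivalence-en}).

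\emph{The fallback is false.} FTC does not imply Scott continuity. Take an infinite set $X$ with the discrete Lowen topology $\tau_X=\I^X$, and put $\Phi(\mu)=\bigwedge_x\mu(x)$. Each FTC test is $\bigwedge_x\bigvee_i(r_i\wedge\beta_i(x))$. Pass to conjunctive normal form and use $\bigwedge_x(a_x\vee c)=(\bigwedge_x a_x)\vee c$ in $\I$; the test is again a finite lattice polynomial of the form \eqref{eq:sugeno-en}, so FTC holds. Yet $\Phi(\chi_F)=0$ for every finite $F$, while $\Phi(\bigvee_F\chi_F)=\Phi(\const 1)=1$. So Scott continuity must come from (ii) applied to a test space that is not a power of $\Sier$.

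\emph{The missing idea} is to tailor the test space to a single directed family $D$ rather than to all of $\tau_X$.
\begin{enumerate}[label=(\alph*)]
 \item Take $Z_D=D\sqcup\{\infty\}$, with the Lowen topology generated by the constants and the maps $u_d$, $d\in D$. Here $u_d(e)=1$ if $d\le e$ or $e=\infty$, and $u_d(e)=0$ otherwise.
 \item Put $W(z,x)=\bigvee_{d\in D}(u_d(z)\wedge d(x))$. It is product-open by construction, being a join of rectangles. Its sections are $W_e=e$ for $e\in D$ and $W_\infty=\bigvee D$.
 \item Every open $q$ of $Z_D$ is monotone on $D$ and satisfies $q(\infty)=\bigvee_{e\in D}q(e)$. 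This holds for the generators and constants, is preserved by arbitrary joins, and is preserved by finite meets because $D$ is directed.
 \item Applying (ii) to $W$ gives $\Phi(\bigvee D)=\bigvee_{e\in D}\Phi(e)$.
\end{enumerate}

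This is essentially your first attempt, with two changes. The generators $\chi_{\uparrow\mu}$ are restricted to indices $\mu\in D$, and a single new top point $\infty$ is adjoined. The restriction stops opens from detecting elements below $\bigvee D$ that lie below no member of $D$. Openness of $W$ then holds with no continuity hypothesis on $\tau_X$.
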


\begin{proof}
Assume (i) and write \(w\) as in \eqref{eq:rect-en}.  For a nonempty
finite \(F\subseteq J\), put
\[
 w_{F,z}=\bigvee_{j\in F}
       (\const{a_j(z)}\wedge b_j).
\]
These finite partial joins form a directed family with supremum \(w_z\).
Scott continuity and FTC give
\[
 \Phi(w_z)=\bigvee_{F\in\Fin_+(J)}\Phi(w_{F,z})\in\tau_Z.
\]
Indeed, for \(F=\{j_1,\ldots,j_n\}\), the corresponding summand is the
composite of \(Z\to\Sier^n\),
\(z\mapsto(a_{j_1}(z),\ldots,a_{j_n}(z))\), with the FTC test of
\(\Phi\) at \((b_{j_1},\ldots,b_{j_n})\).

Conversely, taking \(Z=\Sier^n\) and
\(w(\boldsymbol r,x)=\bigvee_i(r_i\wedge\beta_i(x))\) proves FTC.  To
prove Scott continuity, let \(D\subseteq\tau_X\) be directed and form
\(Z_D=D\sqcup\{\infty\}\).  Generate a Lowen topology on \(Z_D\) by
the constants and
\[
 u_d(e)=1\ (d\leq e\text{ or }e=\infty),
 \qquad u_d(e)=0\text{ otherwise}.
\]
Every fuzzy open \(q\) in this topology is monotone on \(D\) and
satisfies \(q(\infty)=\bigvee_{e\in D}q(e)\).  The product-open map
\[
 W(z,x)=\bigvee_{d\in D}(u_d(z)\wedge d(x))
\]
has sections \(W_e=e\) and \(W_\infty=\bigvee D\).  Applying (ii) to
\(W\) yields
\(\Phi(\bigvee D)=\bigvee_{e\in D}\Phi(e)\).  Applying this identity to
the two-element directed set \(\{\mu,\nu\}\), \(\mu\leq\nu\), also
gives monotonicity.
\end{proof}

\begin{theorem}[Largest splitting topology]\label{thm:max-split-en}
Under \(C(X,\Sier)=\tau_X\), the Lowen topology \(\Sigma_X\) is the
largest splitting topology on \(C(X,\Sier)\).
\end{theorem}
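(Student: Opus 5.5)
The proof has two halves. First, \(\Sigma_X\) is splitting. Second, every splitting topology \(\eta\) on \(C(X,\Sier)\cong\tau_X\) is contained in \(\Sigma_X\). Both halves reduce to Theorem~\ref{thm:prod-stable-en}, once the transposes are translated through Proposition~\ref{prop:class-en}.

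Fix a Lowen space \(Z\) and a continuous \(H:Z\times X\to\Sier\). By Proposition~\ref{prop:class-en} this is the same as a fuzzy open \(w=H\in\tau_{Z\times X}\). The transpose \(\widehat H:Z\to\tau_X\) sends \(z\) to the section \(w_z\), and \(w_z\in\tau_X\) because \(x\mapsto(z,x)\) is continuous. Continuity of \(\widehat H:Z\to(\tau_X,\Sigma_X)\) means that \(\Phi\circ\widehat H\in\tau_Z\) for every \(\Phi\in\Sigma_X\), that is, \(z\mapsto\Phi(w_z)\) is open in \(Z\). This is exactly implication (i)\(\Rightarrow\)(ii) of Theorem~\ref{thm:prod-stable-en}. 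Hence \(\Sigma_X\) is splitting; Proposition~\ref{prop:sigma-en} already guarantees that it is a Lowen topology.

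For maximality, let \(\eta\) be any splitting Lowen topology on \(\tau_X\) and let \(\Phi\in\eta\). I will verify condition (ii) of Theorem~\ref{thm:prod-stable-en} for \(\Phi\). Take any \(Z\) and \(w\in\tau_{Z\times X}\). Then \(w\) is continuous as a map \(Z\times X\to\Sier\). Since \(\eta\) is splitting, \(\widehat w:Z\to(\tau_X,\eta)\) is continuous. Therefore \(\Phi\circ\widehat w\), which is \(z\mapsto\Phi(w_z)\), lies in \(\tau_Z\). By (ii)\(\Rightarrow\)(i) we get \(\Phi\in\Sigma_X\), so \(\eta\subseteq\Sigma_X\).

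All the substantive work, namely building the test spaces \(\Sier^n\) and \(Z_D\), was already done in Theorem~\ref{thm:prod-stable-en}. The only step needing care is the bookkeeping of the identification \(C(X,\Sier)\cong\tau_X\). One must check that under this identification the transpose really is the section map \(z\mapsto w_z\). One must also check that continuity into \((\tau_X,\eta)\) is tested by composing with members of \(\eta\), which is just the definition of continuity in \(\LFTS\).
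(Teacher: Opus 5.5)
Your proposal is correct and follows the paper's proof: splitting of \(\Sigma_X\) comes from Proposition~\ref{prop:sigma-en} together with (i)\(\Rightarrow\)(ii) of Theorem~\ref{thm:prod-stable-en}, and maximality comes from applying splitting to each \(w\in\tau_{Z\times X}\) and then using (ii)\(\Rightarrow\)(i). The paper leaves two points implicit, and you handle both correctly: the transpose is the section map \(z\mapsto w_z\), and \(w_z\in\tau_X\) because \(x\mapsto(z,x)\) is continuous.
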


\begin{proof}
Proposition~\ref{prop:sigma-en} and the forward implication of
Theorem~\ref{thm:prod-stable-en} show that \(\Sigma_X\) is splitting.
If \(\eta\) is any splitting topology and \(\Phi\in\eta\), apply
splitting to each fuzzy open \(w:Z\times X\to\Sier\).  The pullback of
\(\Phi\) along its transpose is \(z\mapsto\Phi(w_z)\); the reverse
implication of Theorem~\ref{thm:prod-stable-en} gives
\(\Phi\in\Sigma_X\).  Thus \(\eta\subseteq\Sigma_X\).
\end{proof}

The following example shows that Scott continuity of a weight is not FTC.

\begin{example}[A one-point weight test]
Let \(X=1\).  Then \(\tau_X\cong\I\).  The weight
\(\Phi(t)=t^2\) is monotone and preserves directed suprema, hence is
Scott-continuous.  For \(n=1\), \(\beta_1=1\), its FTC test is
\(P(r)=r^2\).  If \(r^2=a\vee(r\wedge b)\), substitution of \(0\) and
\(1\) forces \(a=0,b=1\), which would give \(r^2=r\), a contradiction.
Thus \(\Phi\notin\Sigma_X\).
\end{example}

This is a weight-level test, not a space-level counterexample: the
one-point object is exponentiable.  Indeed, the identity weight and
\(\lambda=1\) satisfy SKA below.  The example isolates the finite-tier
condition missing from an unrestricted Scott function-space topology.

\section{The stratified Scott--core theorem}

\begin{definition}[Compatible core and SKA]\label{def:ska-en}
For \(\lambda\in\tau_X\) and \(\Phi\in\Sigma_X\), write
\begin{equation}\label{eq:triangle-en}
 \lambda\triangleleft\Phi
 \quad\Longleftrightarrow\quad
 \const{\Phi(\nu)}\wedge\lambda\leq\nu
 \quad\text{for all }\nu\in\tau_X.
\end{equation}
The space \(X\) satisfies stratified Scott--core approximation (SKA) if
\begin{equation}\label{eq:ska-en}
  \mu=\bigvee_{\substack{\Phi\in\Sigma_X,\ \lambda\in\tau_X\\
                         \lambda\triangleleft\Phi}}
       (\const{\Phi(\mu)}\wedge\lambda)
 \qquad(\mu\in\tau_X).
\end{equation}
\end{definition}

\begin{theorem}[Characterization of exponentiable objects]
\label{thm:main-en}
For a Lowen fuzzy topological space \(X\), the following are equivalent:
\begin{enumerate}[label=(\roman*)]
 \item \(X\) is exponentiable in \(\LFTS\);
 \item the evaluation map
 \[
 e:(\tau_X,\Sigma_X)\times X\to\Sier,
 \qquad e(\mu,x)=\mu(x),
 \]
 is continuous;
 \item \(X\) satisfies SKA.
\end{enumerate}
When these conditions hold, \(Y^X\) has underlying set \(C(X,Y)\),
with the Lowen topology generated by the constants and
\begin{equation}\label{eq:exp-gen-en}
 [\Phi,v](f)=\Phi(v\circ f),
 \qquad \Phi\in\Sigma_X,\quad v\in\tau_Y.
\end{equation}
\end{theorem}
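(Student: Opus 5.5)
I will prove the cycle (i)\(\Rightarrow\)(ii)\(\Rightarrow\)(iii)\(\Rightarrow\)(i), and I will identify the exponential topology inside the last step.

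\emph{(i)\(\Rightarrow\)(ii).} Suppose \(X\) is exponentiable. By Lemma~\ref{lem:exp-law-en}, \(C(X,\Sier)=\tau_X\) carries a topology \(\eta\) that is both splitting and conjoining. Theorem~\ref{thm:max-split-en} gives \(\eta\subseteq\Sigma_X\). Hence the identity \((\tau_X,\Sigma_X)\to(\tau_X,\eta)\) is continuous. Evaluation on \((\tau_X,\Sigma_X)\times X\) factors as this identity times \(\id_X\), followed by the continuous evaluation for \(\eta\), so \(e\) is continuous.

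\emph{(ii)\(\Rightarrow\)(iii).} By Proposition~\ref{prop:class-en}, continuity of \(e\) says \(e\in\tau_{\Sigma_X\times X}\). Lemma~\ref{lem:rect-en} then gives
\[
\mu(x)=\bigvee_j\bigl(\Phi_j(\mu)\wedge\lambda_j(x)\bigr),\qquad \Phi_j\in\Sigma_X,\ \lambda_j\in\tau_X .
\]
Setting \(\mu=\nu\) shows \(\const{\Phi_j(\nu)}\wedge\lambda_j\leq\nu\) for every \(\nu\), that is, \(\lambda_j\triangleleft\Phi_j\). Therefore \(\mu\) is at most the SKA join. The reverse inequality is built into the definition of \(\triangleleft\): each term \(\const{\Phi(\mu)}\wedge\lambda\) with \(\lambda\triangleleft\Phi\) is \(\leq\mu\). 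So SKA holds.

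\emph{(iii)\(\Rightarrow\)(i).} Fix \(Y\) and give \(C(X,Y)\) the topology \(\eta_Y\) generated by the constants and the maps \([\Phi,v]\) of \eqref{eq:exp-gen-en}. By Lemma~\ref{lem:exp-law-en} it suffices to show that \(\eta_Y\) is splitting and conjoining.

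\emph{Splitting.} Let \(H:Z\times X\to Y\) be continuous. Then \(\widehat H\) pulls \([\Phi,v]\) back to \(z\mapsto\Phi((v\circ H)_z)\). Since \(v\circ H\in\tau_{Z\times X}\), Theorem~\ref{thm:prod-stable-en} puts this map in \(\tau_Z\). Pullback preserves constants, joins and finite meets, so checking generators suffices.

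\emph{Conjoining.} I need \(v\circ\ev\) to be open for every \(v\in\tau_Y\). Applying SKA to \(\mu=v\circ f\) gives
\[
v(f(x))=\bigvee_{\lambda\triangleleft\Phi}\bigl([\Phi,v](f)\wedge\lambda(x)\bigr).
\]
This is a union of rectangles with factors \([\Phi,v]\in\eta_Y\) and \(\lambda\in\tau_X\), so by Lemma~\ref{lem:rect-en} it is open in \(C(X,Y)\times X\). Note that the required inequality \(\le\) is again automatic from \(\triangleleft\). This establishes (i).

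\emph{Identifying \(Y^X\).} It remains to show that the exponential topology is exactly \(\eta_Y\), not merely some splitting and conjoining topology. Every splitting and conjoining topology is simultaneously the largest splitting and the smallest conjoining topology, so it is unique and equals \(\eta_Y\).

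I expect the main obstacle to be the uniqueness claim in this last step. It needs the standard argument: if \(\sigma\) is conjoining and \(\eta\) is splitting, apply splitting to \(\ev:(C(X,Y),\sigma)\times X\to Y\) to get \(\eta\subseteq\sigma\). The remaining care is that Theorem~\ref{thm:prod-stable-en} is applied to \(v\circ H\) with arbitrary \(Y\), not just \(Y=\Sier\), but this is immediate since \(v\circ H\) is an ordinary fuzzy open of \(Z\times X\).
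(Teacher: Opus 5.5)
Your proposal is correct and follows the paper's proof step for step. The paper proves (i)\(\Rightarrow\)(ii) by the maximality of \(\Sigma_X\) among splitting topologies, (ii)\(\Rightarrow\)(iii) by the rectangle decomposition of evaluation, and (iii)\(\Rightarrow\)(i) by product stability for splitting and SKA for conjoining, exactly as you do. Your added uniqueness argument (any conjoining topology contains every splitting one, so a splitting and conjoining topology is unique) is a small addition that makes explicit what the paper leaves implicit when it identifies the topology of \(Y^X\).
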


\begin{proof}
Assume (i).  The topology of \(\Sier^X\), transported to \(\tau_X\),
is splitting and its evaluation is continuous.  It is contained in the
largest splitting topology \(\Sigma_X\).  Refining the function-space
factor preserves the openness of the evaluation pullbacks, proving (ii).

Assume (ii).  By Lemma~\ref{lem:rect-en}, the fuzzy open underlying
evaluation admits a rectangle decomposition
\[
 e(\mu,x)=\bigvee_{j\in J}
       (\Phi_j(\mu)\wedge\lambda_j(x)),
 \qquad \Phi_j\in\Sigma_X,\quad\lambda_j\in\tau_X.
\]
Every rectangle is bounded above by \(e\); hence
\(\const{\Phi_j(\nu)}\wedge\lambda_j\leq\nu\) for all \(\nu\), so
\(\lambda_j\triangleleft\Phi_j\).  Fixing \(\mu\) gives both inequalities
in \eqref{eq:ska-en} and proves (iii).

Assume (iii), fix \(Y\), and give \(C(X,Y)\) the topology generated by
\eqref{eq:exp-gen-en}.  If \(H:Z\times X\to Y\) is continuous, then for
\(v\in\tau_Y\) the map \(w(z,x)=v(H(z,x))\) is product-open.  Product
stability gives
\[
 [\Phi,v](\widehat H(z))=\Phi(w_z)\in\tau_Z,
\]
so the topology is splitting.  For evaluation, put \(\mu=v\circ f\).
SKA gives
\begin{align*}
 v(\ev(f,x))
 &=\mu(x)\\
 &=\bigvee_{\lambda\triangleleft\Phi}
   (\Phi(\mu)\wedge\lambda(x))\\
 &=\bigvee_{\lambda\triangleleft\Phi}
   ([\Phi,v](f)\wedge\lambda(x)),
\end{align*}
a join of open rectangles.  Evaluation is continuous, and
Lemma~\ref{lem:exp-law-en} supplies the full exponential adjunction.
\end{proof}

\begin{corollary}[It is enough to test \(\Sier\)]\label{cor:sier-test-en}
The space \(X\) is exponentiable if and only if \(C(X,\Sier)\) admits a
Lowen topology that is both splitting and conjoining.  Whenever one
exists, it may be refined to \(\Sigma_X\) without destroying evaluation
continuity.
\end{corollary}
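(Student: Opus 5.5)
The corollary follows from Theorem~\ref{thm:main-en} combined with Theorem~\ref{thm:max-split-en}. I will prove the two directions of the equivalence and then the refinement claim, which is really part of the reverse direction.

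\emph{Forward direction.} Suppose \(X\) is exponentiable. Lemma~\ref{lem:exp-law-en} with \(Y=\Sier\) says that the topology of \(\Sier^X\) on the set \(C(X,\Sier)\) is both splitting and conjoining. Nothing more is needed.

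\emph{Reverse direction.} Let \(\eta\) be a Lowen topology on \(C(X,\Sier)\cong\tau_X\) that is splitting and conjoining. By Theorem~\ref{thm:max-split-en}, \(\eta\subseteq\Sigma_X\), so the identity map \((\tau_X,\Sigma_X)\to(\tau_X,\eta)\) is continuous. Taking its product with \(\id_X\) and composing with the evaluation \((\tau_X,\eta)\times X\to\Sier\), which is continuous by hypothesis, gives a continuous map. As a function this composite is the evaluation \(e\) of Theorem~\ref{thm:main-en}(ii).

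The one point to check is that products are functorial in \(\LFTS\). Concretely, suppose a fuzzy open \(w\) on \((\tau_X,\eta)\times X\) has the form
\[
 w(\mu,x)=\bigvee_j\bigl(\Phi_j(\mu)\wedge\lambda_j(x)\bigr),\qquad \Phi_j\in\eta,\ \lambda_j\in\tau_X,
\]
as in Lemma~\ref{lem:rect-en}. Since each \(\Phi_j\in\eta\subseteq\Sigma_X\), the same expression is a join of rectangles for the product \((\tau_X,\Sigma_X)\times X\), so \(w\) is open there as well. This shows that refining \(\eta\) to \(\Sigma_X\) keeps the evaluation continuous. It also shows that \(\Sigma_X\) is itself conjoining, and \(\Sigma_X\) is splitting by Theorem~\ref{thm:max-split-en}.

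Condition (ii) of Theorem~\ref{thm:main-en} therefore holds, and the theorem gives (i): \(X\) is exponentiable.

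I expect no real obstacle. The only step with content is the monotonicity of conjoining under refinement of the function-space topology, and the rectangle decomposition handles it in one line.
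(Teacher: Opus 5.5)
Your proposal is correct and follows essentially the same route as the paper. Necessity comes from the topology of \(\Sier^X\). For sufficiency, the given splitting topology \(\eta\) is embedded into \(\Sigma_X\) via Theorem~\ref{thm:max-split-en}, refining the function-space factor preserves continuity of evaluation, and (ii)\(\Rightarrow\)(i) of Theorem~\ref{thm:main-en} finishes. Your rectangle-decomposition check just spells out the refinement step that the paper states in a single phrase.
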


\begin{proof}
Necessity is immediate from \(\Sier^X\).  For sufficiency, embed the
given splitting topology into \(\Sigma_X\), retain evaluation
continuity, and apply Theorem~\ref{thm:main-en}.
\end{proof}

\subsection{Closed-set form and the ``good compactness'' viewpoint}

Let
\[
 \mathcal C_X=\{1-\mu:\mu\in\tau_X\}
\]
be the fuzzy closed-set lattice.  For \(\lambda\in\tau_X\), put
\(K=1-\lambda\).

\begin{proposition}[Closed-core criterion]\label{prop:closed-en}
For \(\lambda\in\tau_X\) and \(\Phi\in\Sigma_X\), the following are
equivalent:
\begin{enumerate}[label=(\roman*)]
 \item \(\lambda\triangleleft\Phi\);
 \item for every \(F\in\mathcal C_X\),
 \begin{equation}\label{eq:closed-compatible-en}
  F\leq K\vee\const{1-\Phi(1-F)},
  \qquad K=1-\lambda.
 \end{equation}
\end{enumerate}
Moreover, SKA is equivalent to
\begin{equation}\label{eq:closed-ska-en}
 F=\bigwedge_{\lambda\triangleleft\Phi}
   \left((1-\lambda)\vee\const{1-\Phi(1-F)}\right)
 \qquad(F\in\mathcal C_X).
\end{equation}
\end{proposition}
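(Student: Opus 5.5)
The plan is to work entirely through the order-reversing involution \(\mu\mapsto 1-\mu\) of \(\I^X\). It maps \(\tau_X\) bijectively onto \(\mathcal C_X\), turns pointwise joins into meets, and satisfies \(1-(\const a\wedge\lambda)=\const{1-a}\vee(1-\lambda)\). Nothing about \(\Sigma_X\) beyond the definitions \eqref{eq:triangle-en} and \eqref{eq:ska-en} should be needed; the proposition is a pure dualization.

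For (i)\(\Leftrightarrow\)(ii), I would substitute \(\nu=1-F\) in \eqref{eq:triangle-en}. As \(F\) ranges over \(\mathcal C_X\), \(\nu\) ranges over all of \(\tau_X\), so (i) reads \(\const{\Phi(1-F)}\wedge\lambda\le 1-F\) for all \(F\in\mathcal C_X\). I would then apply the complement to both sides. Since complementation reverses order, this is equivalent to
\[
F\le 1-\bigl(\const{\Phi(1-F)}\wedge\lambda\bigr)=\const{1-\Phi(1-F)}\vee(1-\lambda)=K\vee\const{1-\Phi(1-F)},
\]
which is \eqref{eq:closed-compatible-en}. Every step is an equivalence, so both directions hold at once.

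For the SKA statement, I would fix \(\mu\in\tau_X\) and set \(F=1-\mu\); this is again a bijection between \(\tau_X\) and \(\mathcal C_X\). Complementing both sides of \eqref{eq:ska-en} uses the infinitary De Morgan law \(1-\bigvee_i a_i=\bigwedge_i(1-a_i)\), which holds pointwise in \(\I\). This gives
\[
1-\mu=\bigwedge_{\lambda\triangleleft\Phi}\bigl((1-\lambda)\vee\const{1-\Phi(\mu)}\bigr),
\]
and writing \(\mu=1-F\) yields \eqref{eq:closed-ska-en}. Since complementation is injective, \eqref{eq:ska-en} holds for \(\mu\) if and only if \eqref{eq:closed-ska-en} holds for \(F\). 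Letting \(\mu\) range over \(\tau_X\) therefore gives the equivalence. The index set \(\{(\Phi,\lambda):\lambda\triangleleft\Phi\}\) is the same on both sides, and part (i)\(\Leftrightarrow\)(ii) lets us describe it in closed-set terms if desired.

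I expect no real obstacle. The only points to check are that the De Morgan law holds for arbitrary (possibly empty) index families in \(\I\), and that the constant-term bookkeeping \(1-\const a=\const{1-a}\) is done correctly. Both are immediate from the pointwise structure of \(\I^X\).
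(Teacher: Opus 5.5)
Your proposal is correct and is essentially the paper's own proof. The paper likewise substitutes \(\nu=1-F\) in the compatibility inequality, applies the pointwise De Morgan laws, and then complements both sides of SKA; your remarks that \(F\mapsto 1-F\) is a bijection \(\mathcal C_X\to\tau_X\) and that the index set is unchanged just spell out what the paper leaves implicit.
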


\begin{proof}
Set \(F=1-\nu\), \(K=1-\lambda\), and apply the pointwise De Morgan
laws to
\(\const{\Phi(\nu)}\wedge\lambda\leq\nu\).  This gives
\eqref{eq:closed-compatible-en}.  Complementing both sides of SKA gives
\eqref{eq:closed-ska-en}.
\end{proof}

Formula \eqref{eq:closed-compatible-en} allows a closed core \(K\) to
miss a closed set \(F\) by a constant, quantitatively controlled tier
\(1-\Phi(1-F)\).  Formula \eqref{eq:closed-ska-en} requires every fuzzy
closed set to be recovered as the filtered meet of all such
``closed-core plus tier margin'' approximants.  This is the precise
compactness-like condition produced by the exponential law; it is
strictly more informative than continuity of the open lattice.  We do
not identify it in advance with any existing fuzzy compactness or
well-filteredness axiom, since such an identification requires a
separate equivalence proof.

Using the Gödel implication
\[
 a\Rightarrow_G b=
 \begin{cases}1,&a\leq b,\\b,&a>b,\end{cases}
\]
define the degree of inclusion
\(\sub(\lambda,\nu)=\bigwedge_x
(\lambda(x)\Rightarrow_G\nu(x))\).  Then
\[
 \lambda\triangleleft\Phi
 \quad\Longleftrightarrow\quad
 \Phi(\nu)\leq\sub(\lambda,\nu)
 \quad(\nu\in\tau_X).
\]
Thus \(\Phi\) is a uniform lower bound on the degree to which the core
\(\lambda\) is contained in a variable fuzzy open set, a stratified
analogue of compact containment.

\section{Some applications}
\label{sec:questions}

Two conventions called ``Lowen space'' must be kept separate.  The
ambient category \(\LFTS\) in this paper consists of all stratified
fuzzy topological spaces.  In the narrower Lowen construct of
Liu--Zhang, the topology is required to have a base of one-step
functions \(a\wedge\chi_U\)~\cite{LiuZhang2000-en}.  The latter is an important full
subconstruct, but exponentiability inside a subconstruct does not by
itself establish the universal property against every object of the
ambient category.

\subsection{Exponentiability implies lattice continuity}

Write \(\ll\) for the way-below relation of the complete lattice
\(\tau_X\) \cite{Gierz1980}.

\begin{theorem}[A necessary continuous-lattice condition]
\label{thm:continuous-en}
If \(X\) is exponentiable in \(\LFTS\), then \(\tau_X\) is a continuous
lattice:
\[
 \mu=\bigvee\{\rho\in\tau_X:\rho\ll\mu\}
 \qquad(\mu\in\tau_X).
\]
\end{theorem}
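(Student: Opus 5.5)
We use Theorem~\ref{thm:main-en}: exponentiability gives SKA,
\[
 \mu=\bigvee_{\lambda\triangleleft\Phi}(\const{\Phi(\mu)}\wedge\lambda)\qquad(\mu\in\tau_X).
\]
Since \(\tau_X\) is closed under arbitrary joins, and hence a complete lattice under the pointwise order, it suffices to show the following: for every compatible pair \(\lambda\triangleleft\Phi\) and every \(t<\Phi(\mu)\), the open set \(\rho=\const t\wedge\lambda\) is way below \(\mu\). Taking the join of these \(\rho\) over \(t<\Phi(\mu)\) recovers \(\const{\Phi(\mu)}\wedge\lambda\), because \(\lambda\) and the constants are open. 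Joining further over all compatible pairs then gives, by SKA, \(\mu\le\bigvee\{\rho:\rho\ll\mu\}\). The reverse inequality is automatic, since \(\rho\ll\mu\) implies \(\rho\le\mu\).

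The key step is the way-below claim. Let \(D\subseteq\tau_X\) be directed with \(\mu\le\bigvee D\). Because \(\Phi\in\Sigma_X\) is monotone and Scott-continuous,
\[
 t<\Phi(\mu)\le\Phi\bigl(\textstyle\bigvee D\bigr)=\bigvee_{d\in D}\Phi(d).
\]
Hence some \(d\in D\) satisfies \(\Phi(d)>t\). Applying the compatibility condition \eqref{eq:triangle-en} with \(\nu=d\) gives
\[
 \const t\wedge\lambda\le\const{\Phi(d)}\wedge\lambda\le d.
\]
This is exactly \(\rho\le d\), so \(\rho\ll\mu\).

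We need the strict inequality \(t<\Phi(\mu)\) because a supremum of the \(\Phi(d)\) need not be attained, and this is the point that needs care. It is harmless: we may restrict to \(\Phi(\mu)>0\), and the join \(\bigvee_{t<s}(\const t\wedge\lambda)=\const s\wedge\lambda\) holds pointwise. When \(\Phi(\mu)=0\) the corresponding term in SKA is \(0\) and contributes nothing. Only Scott continuity of \(\Phi\) is used here; FTC is not needed.
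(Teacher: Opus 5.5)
Your proposal is correct and follows essentially the same route as the paper. You derive SKA from Theorem~\ref{thm:main-en}, show \(\const t\wedge\lambda\ll\mu\) for \(t<\Phi(\mu)\) by combining Scott continuity with compatibility, and then recover \(\const{\Phi(\mu)}\wedge\lambda\) as the join over \(t<\Phi(\mu)\) before substituting into SKA; your explicit handling of the case \(\Phi(\mu)=0\) is a small extra detail that the paper leaves implicit.
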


\begin{proof}
By Theorem~\ref{thm:main-en}, SKA holds.  Fix
\(\lambda\triangleleft\Phi\), \(\mu\in\tau_X\), and
\(r<\Phi(\mu)\).  We claim that
\begin{equation}\label{eq:way-en}
 \const r\wedge\lambda\ll\mu.
\end{equation}
If \(D\subseteq\tau_X\) is directed and
\(\mu\leq\bigvee D\), Scott continuity gives
\[
 \Phi(\mu)\leq\Phi(\bigvee D)
 =\bigvee_{\nu\in D}\Phi(\nu).
\]
Choose \(\nu\in D\) with \(r<\Phi(\nu)\).  Compatibility yields
\[
 \const r\wedge\lambda
 \leq\const{\Phi(\nu)}\wedge\lambda\leq\nu,
\]
which proves \eqref{eq:way-en}.  Moreover,
\[
 \const{\Phi(\mu)}\wedge\lambda
 =\bigvee_{r<\Phi(\mu)}(\const r\wedge\lambda).
\]
Substitution into SKA expresses \(\mu\) as a join of elements way below
it.
\end{proof}

The converse is false in the ambient stratified category.  The
following two-point example places the counterexample immediately after
the necessary condition.  Lattice continuity records order
approximation, whereas FTC and compatible cores also record the action
of scalar tiers in products.

\begin{example}[A genuine two-point counterexample]
\label{ex:two-en}
Let \(X=\{p,q\}\), identify fuzzy sets with coordinate pairs, and put
\begin{equation}\label{eq:two-top-en}
 \tau_X=\{(0,0)\}\cup
 \{(s,t)\in[0,1]^2:0<s\leq t\leq1\}.
\end{equation}
Then \(\tau_X\) is a continuous lattice, but \(X\) is not
exponentiable in \(\LFTS\).
\end{example}

\begin{proof}
The constants belong to \(\tau_X\), and coordinatewise arbitrary joins
and finite meets preserve \eqref{eq:two-top-en}.  Thus it is a Lowen
topology.

If \(0<u<s\) and \(u\leq v<t\), then
\((u,v)\ll(s,t)\).  Indeed, when a directed family has supremum above
\((s,t)\), one may choose members whose first and second coordinates
exceed \(u\) and \(v\), respectively, and then take a common upper
bound.  Every nonzero \((s,t)\) is the supremum of
\(((1-1/n)s,(1-1/n)t)\), \(n\geq2\), and the bottom element is way below
itself.  Hence \(\tau_X\) is continuous.

Fix \(0<a<1\) and \(\mu=(a,1)\).  Let
\(\Phi\in\Sigma_X\), \(\lambda=(c,d)\in\tau_X\), and
\(\lambda\triangleleft\Phi\).  We show
\begin{equation}\label{eq:bound-en}
 \Phi(\mu)\wedge d\leq a.
\end{equation}
If \(c=0\), then \(\lambda=(0,0)\).  If \(c>a\), compatibility at the
\(p\)-coordinate of \(\mu\) gives
\(\Phi(\mu)\wedge c\leq a\), hence \(\Phi(\mu)\leq a\).

It remains to treat \(0<c\leq a\).  Choose
\(0<\varepsilon<c\) and put \(\kappa=(\varepsilon,1)\).  The map
\[
 h:\Sier\to\tau_X,\qquad
 h(r)=\kappa\vee\const r=(\varepsilon\vee r,1),
\]
is induced by the product-open map
\((r,x)\mapsto r\vee\kappa(x)\).  Product stability says that
\(P(r)=\Phi(h(r))\) is fuzzy open in \(\Sier\), so
\(P(r)=b_0\vee(r\wedge b_1)\).  Choose
\(\varepsilon<r_0<c\).  Core compatibility at the \(p\)-coordinate of
\(h(r_0)=(r_0,1)\) gives
\(P(r_0)\wedge c\leq r_0\), and therefore
\(P(r_0)\leq r_0\).  In particular \(b_0\leq r_0<a\), whence
\[
 \Phi(\mu)=P(a)=b_0\vee(a\wedge b_1)\leq a.
\]
This proves \eqref{eq:bound-en}.  Every term on the right-hand side of
SKA therefore has value at most \(a\) at \(q\), whereas \(\mu(q)=1\).
SKA fails and Theorem~\ref{thm:main-en} shows that \(X\) is not
exponentiable.
\end{proof}

This example is not one-step generated: a nonzero fuzzy set of the form
\((0,r)\) is not open.  Hence it does not conflict with the
Liu--Zhang sufficiency theorem in the narrower one-step Lowen construct.

\subsection{Induced Lowen spaces of classical exponentiable spaces}

For a topological space \(X\), let
\[
 \omega X=(X,\omega(\mathcal OX)),
 \qquad
\omega(\mathcal OX)=\{u:X\to\I:u\text{ is lower semicontinuous}\}.
\]
This topology is generated by the one-step functions
\(a\wedge\chi_U\), \(U\in\mathcal OX\).  Induced fuzzy spaces and
their relation to classical topologies are treated in
~\cite{WangHu1985-en,LiuZhang2000-en}.

A classical space is exponentiable in \(\mathbf{Top}\) exactly when it
is core-compact, equivalently, when \(\mathcal OX\) is a continuous
lattice.  Reflection and coreflection identify the classical and
induced objects categorically, but do not by themselves transfer
exponentiability to the entire \(\LFTS\), since both the parameter and
the target in the exponential law may be arbitrary Lowen fuzzy spaces.
The following theorem settles the ambient question by verifying FTC
and SKA explicitly.

\begin{theorem}[Exponentiability equivalence for induced spaces]
\label{thm:induced-equivalence-en}
For every classical topological space \(X\), the following are
equivalent:
\begin{enumerate}[label=\textup{(\roman*)}]
 \item \(X\) is exponentiable in \(\mathbf{Top}\);
 \item \(X\) is core-compact, equivalently, \(\mathcal OX\) is a
 continuous lattice;
 \item the induced fuzzy space \(\omega X\) satisfies SKA;
 \item \(\omega X\) is exponentiable in the entire ambient stratified
 Lowen category \(\LFTS\).
\end{enumerate}
\end{theorem}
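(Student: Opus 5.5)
We will argue (i)\(\Leftrightarrow\)(ii)\(\Rightarrow\)(iii)\(\Leftrightarrow\)(iv)\(\Rightarrow\)(ii). The equivalence (i)\(\Leftrightarrow\)(ii) is the classical Day--Kelly/Isbell theorem quoted before the statement, and (iii)\(\Leftrightarrow\)(iv) is Theorem~\ref{thm:main-en} applied to \(\omega X\). So the real work is (ii)\(\Rightarrow\)(iii) and (iv)\(\Rightarrow\)(ii).

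For (ii)\(\Rightarrow\)(iii), we build tier weights from the way-below relation of \(\mathcal OX\). For \(V\in\mathcal OX\) we define \(\Phi_V:\omega(\mathcal OX)\to\I\) by
\[
 \Phi_V(\mu)=\bigvee\{s\in\I: V\ll\{\mu>s'\}\text{ for some } s'<s\}
 =\bigvee\{s: V\ll\{\mu>s\}\},
\]
using that \(\{\mu>s\}\) is open for lower semicontinuous \(\mu\).

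Scott continuity should follow from two facts. First, for a directed \(D\), \(\{\bigvee D>s\}=\bigcup_{d\in D}\{d>s\}\), a directed union. Second, \(V\ll\bigcup_d\{d>s\}\) forces \(V\subseteq\{d>s\}\) for some \(d\), and interpolation even gives \(V\ll\{d>s\}\) for some \(d\). Here continuity of \(\mathcal OX\) is used for interpolation.

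For FTC, the strata of \(\bigvee_i(\const{r_i}\wedge\beta_i)\) are \(\{\ldots>s\}=\bigcup_{i:r_i>s}\{\beta_i>s\}\). Hence
\[
 P(\boldsymbol r)=\bigvee_{A}\Bigl(\bigwedge_{i\in A}r_i\wedge c_A\Bigr),
 \qquad c_A=\bigvee\{s: V\ll\textstyle\bigcup_{i\in A}\{\beta_i>s\}\},
\]
which is exactly of the form \eqref{eq:sugeno-en}. Checking the sup/\(\wedge\) exchange with strict inequalities is routine but needs care at the endpoints. We will also verify this together with monotonicity.

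Next, we take cores \(\lambda=\chi_U\wedge\const t\), where \(U\ll V\), or more simply \(\lambda=\chi_V\), by checking \(\lambda\triangleleft\Phi_V\) directly. If \(s<\Phi_V(\nu)\) then \(V\subseteq\{\nu>s\}\), so \(\const{\Phi_V(\nu)}\wedge\chi_V\leq\nu\). This means we may use \(\lambda=\chi_V\) itself.

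Then SKA is checked at \(x\) with \(\mu(x)>s\). Continuity gives \(V\ll\{\mu>s\}\) with \(x\in V\), so \(\Phi_V(\mu)\geq s\) and the term \(\const{\Phi_V(\mu)}\wedge\chi_V\) is \(\geq s\) at \(x\). The inequality in the other direction holds automatically. We expect the FTC verification, with its strict-versus-nonstrict level sets, to be the main technical obstacle.

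For (iv)\(\Rightarrow\)(ii), by Theorem~\ref{thm:continuous-en}, \(\omega(\mathcal OX)\) is a continuous lattice. We then transfer continuity to \(\mathcal OX\) via the frame embedding \(U\mapsto\chi_U\), which preserves arbitrary joins and finite meets. Given \(W\) and \(x\in W\), write \(\chi_W=\bigvee\{\rho\ll\chi_W\}\) and pick \(\rho\ll\chi_W\) with \(\rho(x)>1/2\). Set \(U=\{\rho>1/2\}\). We claim \(U\ll W\): if \(W\subseteq\bigcup\mathcal D\) with \(\mathcal D\) directed, then \(\chi_W\leq\bigvee\chi_{\mathcal D}\), so \(\rho\leq\chi_{D}\) for some \(D\in\mathcal D\), giving \(U\subseteq D\). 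Hence \(W=\bigcup\{U\ll W\}\), and \(\mathcal OX\) is continuous.
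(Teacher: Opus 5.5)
Your proposal is correct and follows the paper's proof essentially step for step. It uses the same tier weights \(\Phi_V(\mu)=\bigvee\{s:V\ll\{\mu>s\}\}\), interpolation in \(\mathcal OX\) for Scott continuity, the strict-cut identity with \(c_A=\Phi_V\bigl(\bigvee_{i\in A}\beta_i\bigr)\) for FTC, crisp cores \(\chi_V\triangleleft\Phi_V\), and the transfer \(\rho\ll\chi_W\Rightarrow\{\rho>1/2\}\ll W\) for (iv)\(\Rightarrow\)(ii), where the paper uses level \(0\) instead of \(1/2\).

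There is one small slip. The first expression in your definition of \(\Phi_V\), with ``for some \(s'<s\)'', is not equal to the second: it gives \(\Phi_V(\mu)=1\) whenever \(V\ll\{\mu>0\}\). It should read \(s'>s\) or simply be dropped. Since you only ever use \(\bigvee\{s:V\ll\{\mu>s\}\}\), nothing downstream is affected.
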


\begin{proof}
The classical exponentiation theorem gives
(i)\(\Leftrightarrow\)(ii), and Theorem~\ref{thm:main-en} gives
(iii)\(\Leftrightarrow\)(iv).  We prove the substantive implication
(ii)\(\Rightarrow\)(iii), and then verify
(iv)\(\Rightarrow\)(ii) directly.

Assume that $X$ is core-compact, equivalently, \(\mathcal OX\) is a
 continuous lattice. We first define tier weights from the way-below relation.

For \(\nu\in\omega(\mathcal OX)\) and \(t\in\I\), put
\[
 \nu_t=\{x\in X:\nu(x)>t\}\in\mathcal OX.
\]
For every nonempty \(U\in\mathcal OX\), define
\begin{equation}\label{eq:induced-weight-en}
 \Phi_U(\nu)
 =\bigvee\{t\in\I:U\ll\nu_t\},
\end{equation}
where the supremum of the empty set is \(0\).

Furthermore, we show that $\Phi_U$ is Scott continuous and satisfies FTC.

Monotonicity of $\Phi_U$ is immediate.  Let \(D\) be a directed family in
\(\omega(\mathcal OX)\) and put \(\nu=\bigvee D\).  Clearly,
\[
 \bigvee_{\rho\in D}\Phi_U(\rho)\leq\Phi_U(\nu).
\]
Conversely, fix \(s<\Phi_U(\nu)\).  There is \(t>s\) such that
\(U\ll\nu_t\).  Moreover,
\[
 \nu_t
 =\left\{x:\bigvee_{\rho\in D}\rho(x)>t\right\}
 =\bigcup_{\rho\in D}\rho_t,
\]
and the family \(\{\rho_t:\rho\in D\}\) is directed.  By interpolation
of the way-below relation in the continuous lattice \(\mathcal OX\),
choose \(W\) with
\[
 U\ll W\ll\nu_t.
\]
Some \(\rho_0\in D\) satisfies \(W\subseteq(\rho_0)_t\).  Hence
\[
 U\ll W\subseteq(\rho_0)_t\subseteq(\rho_0)_s,
\]
so \(\Phi_U(\rho_0)\geq s\).  Letting
\(s\uparrow\Phi_U(\nu)\) proves
\[
 \Phi_U\left(\bigvee D\right)
 =\bigvee_{\rho\in D}\Phi_U(\rho).
\]

For the FTC, fix \(\beta_1,\ldots,\beta_n\in\omega(\mathcal OX)\).  For
\(A\subseteq\{1,\ldots,n\}\), put
\[
 \beta_A=\bigvee_{i\in A}\beta_i,\qquad
 c_A=\Phi_U(\beta_A),
\]
with \(\beta_\varnothing=0\).  Then
\begin{equation}\label{eq:induced-ftc-polynomial-en}
 \Phi_U\left(\bigvee_{i=1}^n
       (\const{r_i}\wedge\beta_i)\right)
 =
 \bigvee_{A\subseteq\{1,\ldots,n\}}
 \left(c_A\wedge\bigwedge_{i\in A}r_i\right).
\end{equation}
To verify the identity, compare strict \(t\)-cuts.  Its left-hand side
is greater than \(t\) exactly when some \(s>t\) satisfies
\[
 U\ll
 \left\{x:\bigvee_i(r_i\wedge\beta_i(x))>s\right\}
 =
 \bigcup_{\{i:r_i>s\}}\{\beta_i>s\}.
\]
Taking \(A=\{i:r_i>s\}\) gives
\(c_A>t\) and \(\bigwedge_{i\in A}r_i>t\).  Conversely, if an \(A\)
satisfies these two strict inequalities, choose
\[
 t<q<c_A\wedge\bigwedge_{i\in A}r_i.
\]
Since \(q<c_A=\Phi_U(\beta_A)\), some \(b>q\) satisfies
\(U\ll(\beta_A)_b\).  Since \(q<r_i\) for every \(i\in A\),
\[
 (\beta_A)_b\subseteq(\beta_A)_q
 \subseteq
 \left\{x:\bigvee_i(r_i\wedge\beta_i(x))>q\right\}.
\]
The left-hand side of \eqref{eq:induced-ftc-polynomial-en} is therefore
at least \(q>t\).  Thus the identity holds.  Its right-hand
side is a finite lattice polynomial of the form in
Lemma~\ref{lem:sier-n-en}; hence
\[
 \Phi_U\in\Sigma_{\omega X}.
\]

For the compatible crisp cores, we claim
\begin{equation}\label{eq:induced-compatible-core-en}
 \chi_U\triangleleft\Phi_U.
\end{equation}
If \(x\notin U\), then
\((\const{\Phi_U(\nu)}\wedge\chi_U)(x)=0\).  If \(x\in U\), then for
every \(t<\Phi_U(\nu)\) there is \(s>t\) with \(U\ll\nu_s\).
Consequently, \(x\in\nu_s\) and \(\nu(x)>s>t\).  Letting
\(t\uparrow\Phi_U(\nu)\) gives \(\Phi_U(\nu)\leq\nu(x)\).  Therefore
\[
 \const{\Phi_U(\nu)}\wedge\chi_U\leq\nu
 \qquad(\nu\in\omega(\mathcal OX)),
\]
which proves \eqref{eq:induced-compatible-core-en}.

Now let us show that $\omega X$ sasifies SKA, and thus complete the proof of (ii)\(\Rightarrow\)(iii).

Fix \(\mu\in\omega(\mathcal OX)\), \(x\in X\), and \(r<\mu(x)\).
Choose \(a\) such that
\[
 r<a<\mu(x).
\]
The open set \(V=\mu_a\) contains \(x\).  Core compactness provides an
open set \(U\) with
\[
 x\in U\ll V.
\]
Equations~\eqref{eq:induced-weight-en} and
\eqref{eq:induced-compatible-core-en} give
\[
 \Phi_U(\mu)\geq a>r,\qquad
 \chi_U\triangleleft\Phi_U,
\]
and hence
\[
 (\const{\Phi_U(\mu)}\wedge\chi_U)(x)
 =\Phi_U(\mu)>r.
\]
Letting \(r\uparrow\mu(x)\) proves that the right-hand side of SKA is
at least \(\mu\) pointwise; the reverse inequality is built into core
compatibility.  Thus \(\omega X\) satisfies SKA.

Next, we verify (iv)\(\Rightarrow\)(ii).

If \(\omega X\) is exponentiable in \(\LFTS\), then
Theorem~\ref{thm:continuous-en} shows that
\(\omega(\mathcal OX)\) is a continuous lattice.  For
\(V\in\mathcal OX\),
\[
 \chi_V
 =\bigvee\{\rho\in\omega(\mathcal OX):\rho\ll\chi_V\}.
\]
For each \(\rho\ll\chi_V\), put
\[
 U_\rho=\{x:\rho(x)>0\}.
\]
If a directed family \(\mathcal D\subseteq\mathcal OX\) satisfies
\(V\subseteq\bigcup\mathcal D\), then
\[
 \chi_V\leq\bigvee_{W\in\mathcal D}\chi_W.
\]
The relation \(\rho\ll\chi_V\) gives a \(W\in\mathcal D\) with
\(\rho\leq\chi_W\), and therefore \(U_\rho\subseteq W\).  Hence
\(U_\rho\ll V\).  Pointwise continuity also gives
\[
 V=\bigcup_{\rho\ll\chi_V}U_\rho.
\]
Thus \(\mathcal OX\) is a continuous lattice, and \(X\) is
core-compact.
\end{proof}

\begin{corollary}[Locally compact Hausdorff induced spaces]
\label{cor:lch-en}
If \(X\) is locally compact Hausdorff, then \(\omega X\) is
exponentiable in the entire \(\LFTS\).
\end{corollary}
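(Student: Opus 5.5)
The plan is to reduce the corollary to Theorem~\ref{thm:induced-equivalence-en}, specifically the implication (ii)\(\Rightarrow\)(iv). It therefore suffices to show that a locally compact Hausdorff space \(X\) is core-compact, i.e.\ that \(\mathcal OX\) is a continuous lattice. Once this is done, the theorem gives SKA for \(\omega X\), and Theorem~\ref{thm:main-en} then gives exponentiability of \(\omega X\) in the whole of \(\LFTS\). In that case \(Y^X\) is described by the generators \eqref{eq:exp-gen-en}, with the tier weights \(\Phi_U\) of \eqref{eq:induced-weight-en} among the available weights.

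For core compactness I will use the standard compact-neighbourhood argument. Fix \(V\in\mathcal OX\) and \(x\in V\). In a locally compact Hausdorff space, \(x\) has an open neighbourhood \(U\) whose closure \(\overline U\) is compact and satisfies \(\overline U\subseteq V\). To obtain this, first take a compact neighbourhood \(K\) of \(x\). Then use regularity of the compact Hausdorff subspace \(K\), or equivalently the fact that a locally compact Hausdorff space is regular, to shrink inside \(V\cap\operatorname{int}K\).

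Next I check that \(U\ll V\) in \(\mathcal OX\). Suppose \(\mathcal D\) is a directed family of opens with \(V\subseteq\bigcup\mathcal D\). Then \(\mathcal D\) covers the compact set \(\overline U\). Hence finitely many members cover \(\overline U\), and by directedness a single \(W\in\mathcal D\) contains all of them, so \(U\subseteq W\). Since every point of \(V\) lies in such a \(U\), we get \(V=\bigcup\{U:U\ll V\}\), and \(\mathcal OX\) is continuous.

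I expect no serious obstacle, as the corollary is essentially a specialization of Theorem~\ref{thm:induced-equivalence-en}. The only point needing care is the existence of the relatively compact neighbourhood \(U\) with \(\overline U\subseteq V\), which uses the Hausdorff separation axiom. Without a Hausdorff-type axiom one would instead use the local-compactness definition by neighbourhood bases directly. That alternative gives compact \(K\) with \(x\in\operatorname{int}K\subseteq K\subseteq V\), and then \(U=\operatorname{int}K\) works by the same covering argument with \(K\) in place of \(\overline U\).
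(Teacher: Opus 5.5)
Your proposal is correct and takes the same route as the paper. The paper's proof consists only of the remark that every locally compact Hausdorff space is core-compact, followed by an appeal to Theorem~\ref{thm:induced-equivalence-en}, (ii)\(\Rightarrow\)(iii)\(\Rightarrow\)(iv); you additionally supply the standard argument for that core-compactness step (an open \(U\ni x\) with compact \(\overline U\subseteq V\) satisfies \(U\ll V\)), which the paper leaves implicit.
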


\begin{proof}
Every locally compact Hausdorff space is core-compact; apply
Theorem~\ref{thm:induced-equivalence-en}.
\end{proof}

The Liu--Zhang theorem supplies an exponential law only in the
narrower one-step construct.  Theorem~\ref{thm:induced-equivalence-en}
instead verifies FTC, compatible cores, and SKA directly, and therefore
constructs exponentials against all Lowen fuzzy parameter and target
spaces.

\subsection{Fuzzy compact Hausdorff spaces are exponentiable}

For \(x\in X\) and \(0<\alpha\leq1\), the fuzzy point \(x_\alpha\)
has value \(\alpha\) at \(x\) and \(0\) elsewhere.  Fuzzy sets \(A,B\)
are \emph{quasi-coincident}, written \(A\,q\,B\), if
\[
 \exists z\in X,\qquad A(z)+B(z)>1.
\]
A fuzzy set \(N\) is a \(Q\)-neighbourhood of \(x_\alpha\) if some
\(U\in\tau_X\) satisfies \(x_\alpha qU\) and \(U\leq N\).

\begin{definition}[Wang's \(FT_2\) condition]\label{def:ft2-en}
A fuzzy topological space is fuzzy Hausdorff (\(FT_2\)) if, whenever
\(x_\alpha\) and \(y_\beta\) have different supports, they possess open
\(Q\)-neighbourhoods \(U,V\) such that
\begin{equation}\label{eq:ft2-en}
 U\wedge V=0.
\end{equation}
\end{definition}

This is the Pu--Liu \(Q\)-neighbourhood separation used in Wang's
fuzzy-net framework~\cite{PuLiu1980-en,Srivastava1981-en,Wang1983-en}.
Notice that \(x_\alpha qU\) means
\(U(x)>1-\alpha\); the definition therefore quantifies both supports
and all heights.

Wang introduced nice fuzzy compactness, or \(N\)-compactness, by fuzzy
nets~\cite{Wang1983-en}.  Zhao subsequently extended the notion to
general \(L\)-fuzzy topological spaces and gave an equivalent covering
formulation~\cite{Zhao1987-en}.  Since the present paper fixes
\(L=[0,1]\), that formulation takes the following explicit form.

For \(\alpha\in(0,1]\), a family
\(\mathcal U\subseteq\tau_X\) is an \emph{open
\(\alpha\)-\(Q\)-cover} of \(X\) if
\begin{equation}\label{eq:alpha-q-cover-en}
 \forall x\in X\ \exists U\in\mathcal U:\quad x_\alpha qU,
 \qquad\text{equivalently}\qquad
 \bigvee_{U\in\mathcal U}U(x)>1-\alpha\quad(x\in X).
\end{equation}
It is an \(\alpha^{-}\)-\(Q\)-cover if there is one
\(\beta\in(0,\alpha)\) for which it is a \(\beta\)-\(Q\)-cover, that is,
\begin{equation}\label{eq:alpha-minus-q-cover-en}
 \bigvee_{U\in\mathcal U}U(x)>1-\beta\quad(x\in X).
\end{equation}

We use the following standard hierarchy.
\begin{enumerate}[label=(\arabic*)]
 \item \emph{Lowen fuzzy compactness}: for every \(a\in(0,1]\), if
 \(\mathcal U\subseteq\tau_X\) satisfies
 \(\bigvee\mathcal U(x)\geq a\) for all \(x\), then for any $0<b<a$, there is a finite
 \(\mathcal U_0\subseteq\mathcal U\) still satisfies
 \(\bigvee\mathcal U_0(x)\geq b\) for all \(x\).
 ~\cite{Lowen1976-en,Lowen1978-en}.
 \item \emph{Strong fuzzy compactness}: for every \(a\in[0,1)\), if
 \(\mathcal U\subseteq\tau_X\) satisfies
 \(\bigvee\mathcal U(x)>a\) for all \(x\), then some finite
 \(\mathcal U_0\subseteq\mathcal U\) still satisfies
 \(\bigvee\mathcal U_0(x)>a\) for all \(x\).  Equivalently, the space
 is \(a\)-compact for every \(a\); this implies Lowen fuzzy compactness.
 \item \emph{Nice fuzzy compactness}, or \(N\)-compactness: for every
\(\alpha\in(0,1]\), each open \(\alpha\)-\(Q\)-cover contains a finite
subfamily that is an \(\alpha^{-}\)-\(Q\)-cover..
\end{enumerate}
The covering formulation directly proves that \(N\)-compactness
implies strong fuzzy compactness.  Given an open \(a\)-shading as in
item~(3), put \(\alpha=1-a\).  Equation~\eqref{eq:alpha-q-cover-en}
makes it an open \(\alpha\)-\(Q\)-cover.  A finite
\(\alpha^{-}\)-\(Q\)-subcover supplies some \(0<\beta<\alpha\), and
\eqref{eq:alpha-minus-q-cover-en} gives
\(\bigvee\mathcal U_0(x)>1-\beta>1-\alpha=a\) for every \(x\).
Thus
\begin{equation}\label{eq:compact-chain-en}
\begin{aligned}
 N\text{-compact}&\Longrightarrow
 \text{strongly fuzzy compact}
 &\Longrightarrow\text{Lowen fuzzy compact}.
\end{aligned}
\end{equation}
For more relationships concerning fuzzy compactness, we may refer to references \cite{Wang1985-cn,LiuLuo1997}. This chain concerns only the fixed covering definitions above; no
reverse implication is used below.  More importantly, fuzzy Hausdorff
axioms and fuzzy compactness notions are not unique, and different
combinations need not satisfy the same theorem asserting that compact
Hausdorff fuzzy spaces are topologically generated.  The work of
Warner--McLean on compact Hausdorff \(L\)-fuzzy spaces and Shi's later
comparison of fuzzy compactness notions both underscore the need to
check the precise hypotheses~\cite{WarnerMcLean1993-en,Shi2005-en}.

\begin{theorem}[Precise compact--Hausdorff consequence]
\label{thm:compact-ft2-en}
Let \(X\) be a stratified Lowen fuzzy space, use the Hausdorff axiom of
Definition~\ref{def:ft2-en}, the following statements hold.
\begin{enumerate}[label=(\roman*)]
 \item If \(X\) is Lowen fuzzy compact and \(FT_2\), then \(X\) is
 exponentiable in \(\LFTS\).
 \item If \(X\) is strongly fuzzy compact and \(FT_2\), then \(X\) is
 exponentiable in \(\LFTS\).
 \item If \(X\) is \(N\)-compact and \(FT_2\), then \(X\) is exponentiable in
 \(\LFTS\).
\end{enumerate}
\end{theorem}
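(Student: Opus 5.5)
By the chain \eqref{eq:compact-chain-en}, (iii) implies the hypothesis of (ii), which implies that of (i). So it suffices to prove (i): a Lowen fuzzy compact $FT_2$ space is exponentiable. The plan is to show that such an $X$ is topologically generated, $X=\omega(X_0)$ for a compact Hausdorff classical space $X_0$. Corollary~\ref{cor:lch-en}, or Theorem~\ref{thm:induced-equivalence-en} together with the fact that compact Hausdorff spaces are core-compact, then gives exponentiability in the whole of $\LFTS$.

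\textbf{Step 1: build the classical space.} Let $\iota(\tau_X)$ be the classical topology on $X$ generated by all strict cuts $\nu_t=\{x:\nu(x)>t\}$ with $\nu\in\tau_X$ and $t\in\I$; call the resulting space $X_0$. Every $\nu\in\tau_X$ is lower semicontinuous for $X_0$, so $\tau_X\subseteq\omega(\mathcal O X_0)$.

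\textbf{Step 2: $X_0$ is Hausdorff.} Given $x\neq y$, apply $FT_2$ to the fuzzy points $x_1,y_1$. This yields opens $U,V$ with $U(x)>0$, $V(y)>0$ and $U\wedge V=0$. The cuts $U_0$ and $V_0$ are then disjoint open neighbourhoods of $x$ and $y$.

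\textbf{Step 3: $X_0$ is compact.} Take a cover of $X_0$ by subbasic cuts $\nu^{(k)}_{t_k}$. By Alexander's subbase lemma a subbasic cover suffices. Compactness should come from Lowen compactness via the standard translation: Lowen fuzzy compactness in Lowen's sense is equivalent to compactness of $\iota(\tau_X)$. I expect to cite Lowen~\cite{Lowen1976-en,Lowen1978-en} here rather than reprove it.

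\textbf{Step 4: $\omega(\mathcal O X_0)\subseteq\tau_X$. This is the main obstacle.} Here I have to show that every lower semicontinuous function is already in $\tau_X$, which amounts to showing that each one-step function $a\wedge\chi_W$, with $W$ a cut-generated open set, is a join of members of $\tau_X$. It suffices to prove this for $W$ a finite intersection of cuts. Since $\const a$ and $\nu$ are open, the function $\const a\wedge\nu$ is open, but $\nu$ is not two-valued.

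So the key substep is to produce, for each $x\in W$ and each $a$, an open $\rho\le\chi_W$ with $\rho(x)\ge a$. I would use compactness and $FT_2$ as in the classical proof that a compact Hausdorff topology is minimal Hausdorff:
\begin{itemize}
\item For each $y\notin W$, $FT_2$ applied to $x_1$ and $y_\beta$ with $\beta$ near $1$ gives opens $U^y,V^y$ with $U^y\wedge V^y=0$, $U^y(x)>0$, and $V^y(y)>1-\beta$.
\item The complement $X\setminus W$ is closed in the compact space $X_0$, hence compact, so finitely many $V^{y}$ have cuts covering it.
\item The meet $\rho_0$ of the corresponding $U^{y}$ is open, satisfies $\rho_0(x)>0$, and vanishes wherever some $V^y>0$, so $\rho_0=0$ off $W$.
\end{itemize}
Rescaling is the delicate part: $\rho_0(x)$ may be small. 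I would fix it by using the clamp operations available in $\Sier$ (Lemma~\ref{lem:sier-n-en}) together with the cut $\{\rho_0>0\}$, applying the same construction with heights chosen so that the constant $\const a$ can be meeted with a function positive at $x$. If that fails, I would instead use $x_\alpha$ for $\alpha$ near $1$ in $FT_2$ to force $U(x)>1-\alpha$ close to $1$, and take suprema over $\alpha$.

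Once Step 4 gives $\tau_X=\omega(\mathcal OX_0)$, conclude with Corollary~\ref{cor:lch-en}.
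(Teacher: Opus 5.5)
Your overall route matches the paper's: reduce to (i) via \eqref{eq:compact-chain-en}, show $\tau_X=\omega(\mathcal O X_0)$ for a compact Hausdorff $X_0$, and finish with Corollary~\ref{cor:lch-en}. The paper gets the middle step in one stroke by citing Lowen's theorem that compact Hausdorff fuzzy spaces are topological~\cite{Lowen1981-en}. You try to reprove it instead, and Step~3 is a genuine gap.

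The ``standard translation'' you invoke is false. Compactness of $\iota(\tau_X)$ (Lowen's ultra-fuzzy compactness) is strictly stronger than fuzzy compactness, and even than strong fuzzy compactness. For example, on $X=\{1,2,\ldots\}$ take the stratified topology generated by $\varphi_n=\chi_{\{n\}}\vee\const{1-1/n}$. Every open $\mu$ is $\geq$ a constant $B_\mu$, with equality off the finitely many $n\leq 1/(1-B_\mu)$ (when $B_\mu<1$). This gives strong, hence Lowen, compactness at once. Yet $\{\varphi_n>1-1/n\}=\{n\}$, so $\iota(\tau_X)$ is infinite and discrete. The equivalence you have in mind holds for topologically generated spaces, which is exactly what you are trying to prove. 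So Step~3 must use $FT_2$, and yours does not. Step~4 needs compactness of $X\setminus W$ in $X_0$, so it inherits the gap, and its bumps cannot be used to obtain Step~3 without circularity.

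To close the gap, either cite~\cite{Lowen1981-en} as the paper does, which makes Steps~1--4 unnecessary, or make $FT_2$ and the constants work together. Here is one way.

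For an ultrafilter $\mathcal U$, put $\ell(\nu)=\lim_{\mathcal U}\nu$; this preserves finite joins and meets. Fix $\beta\in(0,\tfrac12)$. Lowen compactness at level $1-\beta$ with target $1-2\beta$ yields a point $z$ such that $U(z)>1-\beta$ forces $\ell(U)\geq 1-2\beta$ for every open $U$. Otherwise the witnesses $U_z$ would have a finite subfamily whose join is $\geq 1-2\beta$ everywhere, while $\ell$ of that join is $\max_i\ell(U_{z_i})<1-2\beta$.

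Applying $FT_2$ to $z_\beta$ and $y_\gamma$ gives, for every $y\neq z$, an open $V^y$ with $V^y(y)>1-\gamma$. It also gives an open $U$ with $U(z)>1-\beta$ and $U\wedge V^y=0$. Since $\ell(U)>0$, this forces $\ell(V^y)=0$.

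Now suppose some $\nu\in\tau_X$ had $\nu(z)>t'>t>\ell(\nu)$. Take $\gamma\leq 1-t'$. Then the family $\{\const{t'}\wedge V^y:y\neq z\}\cup\{\nu\}$ has join $\geq t'$ everywhere. A finite subfamily with join $\geq b\in(t,t')$ must be carried by the $V^{y_i}$ on $\{\nu\leq t\}\in\mathcal U$. Hence $\max_i\ell(V^{y_i})\geq b>0$, a contradiction. Thus $\mathcal U\to z$ in $\iota(\tau_X)$.

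Given compactness, Step~4 works, but not via clamps: a unary open $a\vee(r\wedge b)$ of $\Sier$ cannot raise a small value without adding the floor $a$ everywhere. Your fallback is right with the direction reversed. Since $x_\alpha\,q\,U$ means $U(x)>1-\alpha$, let $\alpha\to 0$. The finite meet of the $U^{y_i}$ then exceeds $1-\alpha$ at $x$ and vanishes off $W$, and the join over $x\in W$ and $\alpha$ is $\chi_W$.

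Repaired this way, your route checks directly that the $Q$-neighbourhood axiom of Definition~\ref{def:ft2-en} suffices. The paper's bare citation leaves that point implicit.
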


\begin{proof}
In either case \eqref{eq:compact-chain-en} yields Lowen fuzzy
compactness. Lowen's theorem~\cite{Lowen1981-en} that compact Hausdorff fuzzy topological
spaces are topologically generated then gives a compact Hausdorff
topological space \(\mathcal OX\) with the original fuzzy topology
equal to \(\omega(\mathcal OX)\).  Every compact Hausdorff space is
locally compact, so Corollary~\ref{cor:lch-en} applies in the full ambient
category.
\end{proof}

\section{Conclusion}

In the ambient stratified Lowen category, exponentiability is governed
by two inseparable pieces of structure: admissible weights form the
largest splitting topology \(\Sigma_X\), hence are Scott-continuous and
finite-tier compatible; and the original fuzzy opens must be recovered
from those weights and their compatible cores by SKA.  The closed-set
formula expresses the same condition as approximation by closed cores
with quantitatively controlled tier margins.
The additional results sharpen the boundary.  Exponentiability always
implies continuity of \(\tau_X\), but Example~\ref{ex:two-en} disproves
the converse.  The tier weights constructed from the classical
way-below relation satisfy FTC and have compatible crisp cores, yielding
\[
 X\text{ exponentiable in }\mathbf{Top}
 \quad\Longleftrightarrow\quad
 \omega X\text{ exponentiable in }\LFTS.
\]
Fuzzy compact-Hausdorff topological spaces are induced by compact-Hausdorff topological spaces and thus exponentiable in \(\LFTS\).

In this paper, we introduce the SKA property of fuzzy topological spaces and prove that it characterizes the exponentiability of a topological space. Nevertheless, its further order-theoretic, algebraic, and topological aspects are still worthy of further study.


\begin{thebibliography}{99}

\bibitem{Alderton1988-en}
I. W. Alderton,
Splitting and conjoining objects in monotopological categories,
\emph{Topology and its Applications}
\textbf{29} (1988), 223--235,
doi:10.1016/0166-8641(88)90022-3.

\bibitem{Alderton1989-en}
I. W. Alderton,
Function spaces in fuzzy topology,
\emph{Fuzzy Sets and Systems}
\textbf{32} (1989), 115--124,
doi:10.1016/0165-0114(89)90092-4.

\bibitem{Chang1968-en}
C. L. Chang,
Fuzzy topological spaces,
\emph{Journal of Mathematical Analysis and Applications}
\textbf{24} (1968), 182--190.

\bibitem{DangBehera1996-en}
S. Dang and A. Behera,
On fuzzy compact-open topology,
\emph{Fuzzy Sets and Systems}
\textbf{80} (1996), 377--381.

\bibitem{EscardoHeckmann-en}
M. H. Escard\'o and R. Heckmann,
Topologies on spaces of continuous functions,
\emph{Topology Proceedings}
\textbf{26} (2001--2002), 545--564.

\bibitem{Gierz1980}
G. Gierz et al., A compendium of continuous lattices,
Springer, Berlin, 1980

\bibitem{Jager1999-en}
G. J\"ager,
On fuzzy function spaces,
\emph{International Journal of Mathematics and Mathematical Sciences}
\textbf{22} (1999), 727--737,
doi:10.1155/S0161171299227275.

\bibitem{Li1999-en}
Y. -M. Li,
Exponentiable objects in the category of topological molecular lattices,
\emph{Fuzzy Sets and Systems}
\textbf{104} (1999), 407--414,
doi:10.1016/S0165-0114(98)00316-9.

\bibitem{LiuLuo1997}
Y. -M. Liu, M. -K. Luo, Fuzzy Topology, World Scientific, Singapore, 1997.

\bibitem{LiuZhang2000-en}
Y. -M. Liu and D. -X. Zhang,
Lowen spaces,
\emph{Journal of Mathematical Analysis and Applications}
\textbf{241} (2000), 30--38,
doi:10.1006/jmaa.1999.6586.

\bibitem{LiuZhang2001-en}
Y. -M. Liu and D.-X. Zhang,
On exponential Lowen spaces,
in \emph{Proceedings of the Joint 9th IFSA World Congress and
20th NAFIPS International Conference}, vol.~2, 2001,
doi:10.1109/NAFIPS.2001.944775.

\bibitem{LiuZhangLuo2000-en}
Y. -M. Liu, D. -X. Zhang, and M. -K. Luo,
Initial and final structures of fuzzy topological spaces,
\emph{Journal of Mathematical Analysis and Applications}
\textbf{251} (2000), 649--668,
doi:10.1006/jmaa.2000.7036.

\bibitem{Lowen1976-en}
R. Lowen,
Fuzzy topological spaces and fuzzy compactness,
\emph{Journal of Mathematical Analysis and Applications}
\textbf{56} (1976), 621--633.

\bibitem{Lowen1978-en}
R. Lowen,
A comparison of different compactness notions in fuzzy topological spaces,
\emph{Journal of Mathematical Analysis and Applications}
\textbf{64} (1978), 446--454.

\bibitem{Lowen1981-en}
R. Lowen,
Compact Hausdorff fuzzy topological spaces are topological,
\emph{Topology and its Applications}
\textbf{12} (1981), 65--74,
doi:10.1016/0166-8641(81)90030-4.

\bibitem{LowenSrivastava1988-en}
R. Lowen and A. K. Srivastava,
Sierpinski objects in subcategories of FTS,
\emph{Quaestiones Mathematicae}
\textbf{11} (1988), 181--193.

\bibitem{LowenSrivastava1989-en}
R. Lowen and A. K. Srivastava,
\(\mathrm{FTS}_0\): The epireflective hull of the Sierpinski object in FTS,
\emph{Fuzzy Sets and Systems}
\textbf{29} (1989), 171--176.

\bibitem{Mirhosseinkhani2024-en}
G. Mirhosseinkhani,
Some categorical structures of topological fuzzes,
in P. Bracken (ed.), \emph{Recent Topics on Topology---From Classical
to Modern Applications}, IntechOpen, 2024,
doi:10.5772/intechopen.1005849.

\bibitem{Peng1984-en}
Y. Peng,
Topological structure of a fuzzy function space---the pointwise convergent
topology and compact open topology,
\emph{Kexue Tongbao (English Edition)}
\textbf{29} (1984), 289--292.

\bibitem{PuLiu1980-en}
P. -M. Pu and Y. -M. Liu,
Fuzzy topology I: Neighborhood structure of a fuzzy point and
Moore--Smith convergence,
\emph{Journal of Mathematical Analysis and Applications}
\textbf{76} (1980), 571--599.

\bibitem{Schwarz1983-en}
F. Schwarz,
Powers and exponential objects in initially structured categories and
applications to categories of limit spaces,
\emph{Quaestiones Mathematicae}
\textbf{6} (1983), 227--254,
doi:10.1080/16073606.1983.9632302.

\bibitem{SinghSrivastava2013-en}
A. K. Singh and R. Srivastava,
A characterization of the category \(Q\)-TOP,
\emph{Fuzzy Sets and Systems}
\textbf{227} (2013), 46--50.

\bibitem{Shi2005-en}
F.-G. Shi,
A new notion of fuzzy compactness in \(L\)-topological spaces,
\emph{Information Sciences}
\textbf{173} (2005), 35--48,
doi:10.1016/j.ins.2004.06.004.

\bibitem{Solovyov2008-en}
S. A. Solovyov,
Sobriety and spatiality in varieties of algebras,
\emph{Fuzzy Sets and Systems}
\textbf{159} (2008), 2567--2585,
doi:10.1016/j.fss.2008.02.010.


\bibitem{Srivastava1981-en}
R. Srivastava, S. N. Lal and A. K. Srivastava,
Fuzzy Hausdorff topological spaces,
\emph{Journal of Mathematical Analysis and Applications}
\textbf{81} (1981), 497--506,
doi:10.1016/0022-247X(81)90078-0.

\bibitem{Srivastava1984-en}
A. K. Srivastava,
Fuzzy topology and Sierpinski fuzzy space,
\emph{Journal of Mathematical Analysis and Applications}
\textbf{103} (1984), 103--117,
doi:10.1016/0022-247X(84)90160-4.

\bibitem{SrivastavaSrivastava1986-en}
R. Srivastava and A. K. Srivastava,
The Sierpinski object in fuzzy topology,
in \emph{Modern Analysis and Its Applications},
Prentice-Hall of India, New Delhi, 1986, pp.~17--25.

\bibitem{TiwariSrivastava2022-en}
H. Tiwari and R. Srivastava,
On coreflective hulls in Str-\(Q\)-TOP,
\emph{Soft Computing}
\textbf{26} (2022), 527--534,
doi:10.1007/s00500-021-06537-z.

\bibitem{TiwariSrivastava2021-en}
H. Tiwari and R. Srivastava,
Exponential \(Q\)-topological spaces,
\emph{Fuzzy Sets and Systems}
\textbf{406} (2021), 58--65,
doi:10.1016/j.fss.2019.11.012.

\bibitem{Wang1983-en}
G. -J. Wang,
A new fuzzy compactness defined by fuzzy nets,
\emph{Journal of Mathematical Analysis and Applications}
\textbf{94} (1983), 1--23,
doi:10.1016/0022-247X(83)90002-1.

\bibitem{Wang1985-cn}
G. -J. Wang, Theory of L-Fuzzy Topological Spaces, Shaanxi Normal University Press, Xi'an, 1985.

\bibitem{WangHu1985-en}
G. -P. Wang and L. Hu,
On induced fuzzy topological spaces,
\emph{Journal of Mathematical Analysis and Applications}
\textbf{108} (1985), 495--506.

\bibitem{WarnerMcLean1993-en}
M. W. Warner and M. W. McLean,
On compact Hausdorff \(L\)-fuzzy spaces,
\emph{Fuzzy Sets and Systems}
\textbf{56} (1993), 103--110,
doi:10.1016/0165-0114(93)90190-S.

\bibitem{Zhao1987-en}
D. Zhao,
The \(N\)-compactness in \(L\)-fuzzy topological spaces,
\emph{Journal of Mathematical Analysis and Applications}
\textbf{128} (1987), 64--79,
doi:10.1016/0022-247X(87)90214-9.

\end{thebibliography}
\end{document}